\numberwithin{equation}{section}
\theoremstyle{plain}
\newtheorem{theorem}{Theorem}[section]
\newtheorem{lemma}[theorem]{Lemma}
\newtheorem{proposition}[theorem]{Proposition}
\theoremstyle{definition}
\theoremstyle{remark}
\newtheorem{remark}[theorem]{Remark}
 \newcommand{\R}{\mathbb R}
 \newcommand{\Z}{\mathbb Z}
 \newcommand{\N}{\mathbb N}
 \renewcommand{\H}{\mathbb H}
 \newcommand{\cR}{\mathcal R}
 \newcommand{\cS}{\mathcal S}
 \newcommand{\cD}{\mathcal D}
 \newcommand{\cB}{\mathcal B}
\newcommand{\Lc}{\Lambda}
\newcommand{\lr}[4]{#3\xleftrightarrow[#1]{#2} #4}
\newcommand{\nlr}[4]{#3\centernot{\xleftrightarrow[#1]{#2}} #4}
\title{Slab percolation for the Ising model revisited}
\author{Franco Severo\footnotemark[1]\footnote{ETH Z\"{u}rich, franco.severo@math.ethz.ch}}
\date{}
\begin{document}
\thispagestyle{empty}
\maketitle

\begin{abstract}
In this note, we give a new and short proof for a theorem of Bodineau stating that the slab percolation threshold $\hat{p}_c$ for the FK-Ising model coincides with the standard percolation critical point $p_c$ in all dimensions $d\geq3$. Both proofs rely on the positivity of the surface tension for $p>p_c$ proved by Lebowitz \& Pfister.
The key difference is that while Bodineau's proof is based on a delicate dynamic renormalization inspired by the work of Barsky, Grimmett \& Newman, our proof utilizes a technique of Benjamini \& Tassion to prove the uniqueness of macroscopic clusters via sprinkling, which then implies percolation on slabs through a rather straightforward static renormalization.
\end{abstract}

\section{Introduction}\label{sec:intro}

We study the supercritical phase of the FK (also known as random-cluster) model with cluster weight $q\geq1$ on $\Z^d$, $d\geq 3$. This class of percolation models was introduced by Fortuin \& Kasteleyn \cite{ForKas72} and is intimately linked to the $q$-state Potts model for integers $q\geq2$ -- see \cite{Gri06,DumNotes17} for an account on these models and their connections. For $q=2$ this model is sometimes called the FK-Ising model.

The model is defined as follows. Let $E$ be the set of nearest neighbour edges of $\Z^d$. For a finite $\Lambda\subset \Z^d$, denote by $E(\Lambda)\subset E$ the set of edges of $\Z^d$ intersecting $\Lambda$.
Let $q\geq1$, $p\in[0,1]$ and $\xi\in \{0,1\}^{E\setminus E(\Lambda)}$, and consider the probability measure $\phi^\xi_{\Lambda,p,q}$ on $\Omega_\Lambda\coloneqq \{0,1\}^{E(\Lambda)}$ given by
\begin{equation*}
    \phi^\xi_{\Lambda,p,q}(\omega)\coloneqq \frac{1}{Z^\xi_{\Lambda,p,q}} p^{o(\omega)}(1-p)^{c(\omega)}q^{k^\xi_\Lambda(\omega)},
\end{equation*}
where $o(\omega)$ and $c(\omega)$ denote the number of open ($=1$) and closed ($=0$) edges in $\omega$, $k^\xi_\Lambda(\omega)$ denotes the number of clusters intersecting $\Lambda$ in the concatenation $\omega\cup \xi \in \{0,1\}^E$, and $Z^\xi_{\Lambda,p,q}$ is a renormalizing constant. We often refer to $\xi\equiv 1$ and $\xi\equiv 0$ as \emph{wired} and \emph{free} boundary conditions. By monotonicity in boundary conditions, we can construct the infinite volume measure $\phi^0_{p,q}$ (resp. $\phi^1_{p,q}$) on $\Omega\coloneqq \{0,1\}^E$ by taking a weak limit of $\phi^0_{\Lambda,p,q}$ (resp. $\phi^1_{\Lambda,p,q}$) as $\Lambda\uparrow\Z^d$.
We define the \emph{percolation threshold}
\begin{equation*}
    p_c=p_c(q,d)\coloneqq \inf\{p\in[0,1]:~\phi^1_{p,q}(\lr{}{}{0}{\infty})>0\}.
\end{equation*}
For $d\geq3$, consider the ``slab box'' of size $N$ and thickness $L$ given by
\begin{equation*}
    \cS(L,N)\coloneqq \{-L,\ldots,L\}^{d-2}\times \{-N,\ldots,N\}^2.
\end{equation*}
We define the \emph{slab percolation threshold} 
\begin{equation*}
    \hat{p}_c=\hat{p}_c(q,d)\coloneqq \inf \big\{ p\in[0,1]:~\exists L\geq0 \text{ such that } \inf_{N}\inf_{x\in \cS(L,N)} \phi^0_{\cS(L,N),p,q}(\lr{}{}{0}{x})>0 \big\}.
\end{equation*}
It is clear that $\hat{p}_c\geq p_c$. The slab percolation threshold was introduced in the seminal work of Pisztora \cite{Pis96}, where a powerful coarse graining technique was developed to describe the behavior of the model assumption that $p>\hat{p}_c$. This technique has then found multiple applications in the study of fundamental features of supercritical percolation, such as the Wulff crystal construction \cite{Bod99,CerPis00,CerPis01}, the structure of translation invariant Gibbs measures for the Ising model \cite{Bod06}, the exponential decay of truncated Ising correlations \cite{DGR20} and the existence of long range order for the random the field Ising model \cite{DLX22}, to cite just a few.

It is conjectured that $\hat{p}_c(q,d)=p_c(q,d)$ for all $q\geq1$ and $d\geq 3$, which implies that Pisztora's coarse graining and its consequences are valid up to the critical point. This has only been proved for $q=1$ (corresponding to Bernoulli percolation) by Grimmett \& Marstrand \cite{GriMar90} and for $q=2$ (corresponding to the Ising model) by Bodineau \cite{Bod05} -- see however \cite{DumTas19} for a weaker result for integer $q$. 
In this note, we give a new proof for the Ising case.

\begin{theorem}\label{thm:main_Ising}
    One has $\hat{p}_c(2,d)=p_c(2,d)$ for all $d\geq 3$.
\end{theorem}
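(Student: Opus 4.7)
Fix $p>p_c=p_c(2,d)$ and choose an auxiliary parameter $p'\in(p_c,p)$. I aim to exhibit an integer $L\ge 1$ and a uniform positive lower bound for $\phi^0_{\cS(L,N),p,2}(\lr{}{}{0}{x})$ as $N\to\infty$ and $x$ ranges over $\cS(L,N)$. The strategy is to do the heavy lifting at the easier parameter $p'$, where the Lebowitz--Pfister theorem supplies many disjoint macroscopic crossings of a large block, and then to \emph{sprinkle} from $p'$ up to $p$ to fuse these crossings into a single cluster that can then be propagated through a renormalised lattice.

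The first step exploits positivity of the surface tension $\tau(p')>0$. Translated into the slab geometry, it says that the probability of cutting a block $\cS(L,n)$ between its two opposite thick faces at level $p'$ decays exponentially in the cross-sectional area of order $L^{d-2}n$. Combining this with a BK-style extraction yields, inside such a block, of order $L^{d-2}$ internally disjoint open paths connecting opposite thick faces with probability tending to $1$ as $L\to\infty$. This is the only point where the assumption $p'>p_c$ is used.

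In the second step, I would implement the Benjamini--Tassion sprinkling argument to deduce uniqueness of the macroscopic cluster after raising the parameter to $p$. Conditioning on the collection $\{C_i\}$ of macroscopic clusters realised at level $p'$ inside a block, each pair $(C_i,C_j)$ admits many disjoint short ``bridges'' (paths of length $O(1)$ with one endpoint in each of $C_i,C_j$). Using the finite-energy lower bound $\phi^\xi_{\Lambda,p,2}(\omega_e=1\mid\omega_{E\setminus\{e\}})\ge p/(p+2(1-p))$ together with FKG, the conditional probability that all such bridges are closed at level $p$ decays exponentially in $L$. A union bound over the $O(L^{2(d-2)})$ pairs then shows that all macroscopic $p'$-clusters in the block merge into one $p$-cluster with probability $\to 1$ as $L\to\infty$.

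Finally, I would close the argument with a static renormalisation. Tile $\cS(L,\infty)$ by translates of a block of side $n$ in the thick directions; call such a block \emph{good} when, inside a suitable thickening, the conclusions of the first two steps hold and its unique macroscopic $p$-cluster traverses the block in each thick direction. Goodness is a finite-range event of probability $\to 1$, so by Liggett--Schonmann--Stacey the good blocks dominate a supercritical Bernoulli site percolation on $\Z^2$ for $L,n$ large, yielding an infinite good cluster and hence an infinite open FK cluster inside $\cS(L,\infty)$; this gives $\hat p_c(2,d)\le p$, and letting $p\downarrow p_c$ proves the theorem. The principal obstacle is the sprinkling step: the original Benjamini--Tassion argument leans on independence of disjoint edges, which is unavailable here, and must be reworked using finite energy and FKG through a careful conditioning that exposes the clusters $\{C_i\}$ while preserving a pool of ``fresh'' edges whose conditional open probabilities at level $p$ retain a uniform positive lower bound; once this FK-adapted sprinkling lemma is in place, Steps~1 and~3 are comparatively routine.
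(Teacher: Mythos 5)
Your plan reproduces the paper's overall architecture (surface tension at an auxiliary parameter, Benjamini--Tassion sprinkling to merge macroscopic clusters, then a Liggett--Schonmann--Stacey static renormalization in the slab), but three load-bearing steps are missing or would fail as described. First, boundary conditions: Lebowitz--Pfister gives positivity of the \emph{wired} surface tension, whereas everything you need downstream (estimates valid with free boundary conditions, or uniformly in boundary conditions, inside the slab renormalization) concerns a \emph{decreasing} disconnection event $\mathcal{D}$, for which the wired bound sits on the wrong side of monotonicity: $\phi^1[\mathcal{D}]\le\phi^0[\mathcal{D}]$, so the wired estimate says nothing about free boundary conditions. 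Passing from wired to free surface tension is a genuinely Ising-specific step (the paper's Theorem~\ref{thm:tilde_p_c=p_c}, proved in the appendix via the weak-mixing/wetting inputs of Fr\"ohlich--Pfister and Messager--Miracle-Sol\'e--Pfister, following Bodineau), and your proposal never addresses it.

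Second, the sprinkling step, which you yourself flag as ``the principal obstacle'', is precisely what has to be proved, and your sketch of it would not work as stated. Conditioning on the macroscopic $p'$-clusters $\{C_i\}$ and then ``opening bridges at level $p$'' has no meaning without a coupling; the paper's resolution is to sprinkle with an \emph{independent} Bernoulli($\varepsilon$) field $\gamma$, so that independence of the unexplored bridge edges is automatic, and to prove separately, via Russo's formula for FK measures, the stochastic domination $\omega_p\cup\gamma_\varepsilon\prec\omega_{p'}$ (Lemma~\ref{lem:stoc_dom}) --- exactly the ``FK-adapted sprinkling lemma'' you defer. Moreover, your merging scheme (a union bound over the $O(L^{2(d-2)})$ pairs $(C_i,C_j)$, each pair joined by many disjoint bridges of length $O(1)$) is flawed even granting such a coupling: two given crossing clusters need not come within bounded distance of each other anywhere --- they can be separated by other clusters, or lie at lateral distance of order $L$. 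What the surface-order bound actually yields is only that the \emph{union} of boundary-touching clusters is dense at scale $\ell\asymp(\log L)^{1/(d-1)}$ (the paper's Lemma~\ref{lem:surface_free_box}; note also that BK-type extraction is not available for $q=2$, and a count of disjoint crossings is not the statement needed). Hence bridges have length of order $\ell$, each opening only with probability $\varepsilon^{C\ell}$, polynomially small in $L$, and the merging must proceed not pairwise but by iteratively halving the number of equivalence classes of crossing clusters over sub-annuli, with a union bound over partitions into two groups each containing a crossing (Lemma~\ref{lem:gluing}); one also needs the resulting uniqueness estimate uniformly in boundary conditions for the LSS step. Without these ingredients the argument does not close.
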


Similarly to \cite{Bod05}, a crucial ingredient in our proof is the positivity of the surface tension. 
Consider the rectangle
\begin{equation*}
    \cR(L,M)\coloneqq \{-L,\ldots,L\}^{d-1} \times \{-M,\ldots,M\}
\end{equation*}
and its top and bottom sides given by $\partial^{top} \cR(L,M)= \{-L,\ldots,L\}^{d-1} \times \{M\}$ and $\partial^{bot} \cR(L,M)= \{-L,\ldots,L\}^{d-1} \times \{-M\}$.
We define the (free) \emph{surface tension} (in the direction $e_d$) as
\begin{equation}\label{eq:def_tau}
    \tilde{\tau}_p=\tilde{\tau}_p(q,d)\coloneqq \sup_{C\geq1}\sup_{\delta>0}\, \limsup_{L\to\infty} \frac{1}{L^{d-1}} \log \Big( \phi^0_{\Lambda_{CL},p,q} \big[ \nlr{}{}{\partial^{bot} \cR(L,\delta L)}{\partial^{top} \cR(L,\delta L)} \big]^{-1} \Big),
\end{equation}
where $\Lambda_N\coloneqq \{-N,\ldots,N\}^d$.
We can then consider the associated critical point
\begin{equation*}
    \tilde{p}_c=\tilde{p}_c(q,d)\coloneqq \inf\{p\in[0,1]:~ \tilde{\tau}_p(q,d)>0\}.
\end{equation*}
It is easy to see that $\tilde{p}_c\geq p_c$. 
Lebowitz \& Pfister \cite{LebPfi81} proved that in the Ising case $q=2$ one has $\tau_p>0$ for all $p>p_c$, where $\tau_p$ is defined similarly but with \emph{wired} boundary conditions on $\cR(L,\delta L)$ instead -- see Appendix~\ref{sec:appendix}. Using a weak mixing property, one can compare wired and free boundary conditions, thus leading to the following.

\begin{theorem}\label{thm:tilde_p_c=p_c}
    One has $\tilde{p}_c(2,d)=p_c(2,d)$ for all $d\geq3$.
\end{theorem}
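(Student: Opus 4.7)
The first part $\tilde p_c \geq p_c$ will be immediate: when $p<p_c$, two-point connectivities decay exponentially under $\phi^0_{\Lambda_{CL},p,2}$, so for every fixed $C,\delta$ the disconnection event appearing in \eqref{eq:def_tau} has probability tending to $1$ as $L\to\infty$, forcing $\tilde\tau_p=0$.

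For $\tilde p_c \leq p_c$, the plan is to fix $p>p_c$ and start from the Lebowitz-Pfister result (treated in Appendix~\ref{sec:appendix}): there exist $\delta_0>0$, $C_0\geq 1$, and $\alpha>0$ such that the wired analogue of the disconnection probability on $\cR(L,\delta_0 L)$ is at most $e^{-\alpha L^{d-1}}$ for all large $L$. The task will be to transfer this bound to the free-BC quantity appearing in $\tilde\tau_p$.

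The lever I would use is the uniqueness of the infinite-volume FK-Ising measure off criticality, $\phi^0_{p,2}=\phi^1_{p,2}$ for $p\neq p_c$, a classical consequence of the continuity of the spontaneous magnetization of the Ising model outside $p_c$. Via the Domain Markov Property, one conditions $\phi^0_{\Lambda_{CL}}$ on the edges outside $\Lambda_{C_0L}$: the induced law on $\Lambda_{C_0L}$ is a finite-volume FK measure $\phi^\eta_{\Lambda_{C_0L}}$ with random boundary condition $\eta$. The uniqueness $\phi^0_{p,2}=\phi^1_{p,2}$, combined with the FKG monotonicity of $\phi^0_\Lambda$ and $\phi^1_\Lambda$ in $\Lambda$, translates into a weak-mixing comparison between the free and wired finite-volume measures. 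On a high-probability event, $\phi^\eta_{\Lambda_{C_0L}}$ is dominated (in FKG order) by the wired measure, so that the decreasing disconnection event has probability bounded by the wired analogue plus a small error; integrating yields $\phi^0_{\Lambda_{CL}}\big[\nlr{}{}{\partial^{bot}\cR(L,\delta_0 L)}{\partial^{top}\cR(L,\delta_0 L)}\big]\lesssim e^{-\alpha L^{d-1}}$, and hence $\tilde\tau_p>0$.

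The hard part will be making the weak-mixing comparison quantitative at the surface rate $L^{d-1}$, since the qualitative uniqueness $\phi^0_{p,2}=\phi^1_{p,2}$ gives a priori only errors exponentially small in the linear distance $(C-C_0)L$ between inner and outer boundaries, whereas the target rate is in $L^{d-1}$. Overcoming this will rely on Ising-specific tools — the random current representation, Lee-Yang-type monotonicity, or Alexander-style half-space estimates — which are strong enough in the supercritical regime to upgrade the qualitative uniqueness to a comparison compatible with the surface scaling. This is exactly the weak mixing property alluded to in the introduction and is implemented in detail in the appendix alongside the Lebowitz-Pfister estimate.
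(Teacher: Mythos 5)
Your overall architecture is the right one (and the same as the paper's for the easy direction and the starting point): $\tilde p_c\geq p_c$ via exponential decay below $p_c$ is fine, and for $\tilde p_c\leq p_c$ one indeed starts from the Lebowitz--Pfister positivity of the \emph{wired} surface tension and must transfer it to free boundary conditions. But the transfer step, which is the entire content of the theorem beyond \cite{LebPfi81}, is not actually supplied, and the mechanism you sketch does not work as stated. First, the domination step is directionally wrong: \emph{every} boundary condition $\eta$ satisfies $\phi^\eta_{\Lambda_{C_0L},p}\prec\phi^1_{\Lambda_{C_0L},p}$, and since the disconnection event is \emph{decreasing} this only yields $\phi^\eta[\,\cdot\,]\geq\phi^1[\,\cdot\,]$, i.e.\ a lower bound, whereas you need an upper bound on the free-boundary disconnection probability in terms of the wired one. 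Second, the quantitative input you would need cannot come from the uniqueness $\phi^0_{p,2}=\phi^1_{p,2}$: uniqueness of the infinite-volume measure gives convergence for fixed local events with no rate at all, while here the event has probability $e^{-\alpha L^{d-1}}$ and lives on a region of diameter $L$, so even an error $e^{-c(C-C_0)L}$ per comparison (which uniqueness does not give) would swamp the target bound for $d\geq3$. Worse, the strong/ratio weak mixing statements that would make your scheme run are, in the supercritical regime, typically proved \emph{using} Pisztora coarse graining and hence slab percolation (e.g.\ \cite{DGR20}), so invoking them here risks circularity. Deferring this to ``random currents, Lee--Yang, or Alexander-style estimates'' and to ``the appendix'' leaves the key step unproved.

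For comparison, the paper's proof of Proposition~\ref{prop:surface_tension_wired=free} avoids needing any quantitative mixing at surface order by treating the two pieces of the boundary differently. The lateral boundary of $\cR(L,\delta L)$ has size only $O(\delta L^{d-1})$, so its wiring is removed by a crude Radon--Nikodym estimate costing $e^{C\delta L^{d-1}}$, which is harmless once $\delta$ is chosen small compared to $\tau_\beta$. The wiring on the top and bottom faces (of full area $L^{d-1}$) is removed by interpolating the intensity $sp$ on the $O(L^{d-1})$ bonds of $\cB$ incident to these faces and applying Russo's formula: since the disconnection event $\cD$ is supported in $\cR(L,\tfrac\delta2 L)$, each derivative term is controlled by the single-bond discrepancy $\phi^{s,1}_{H(\frac\delta2 L),p}[\omega_{b_0}]-\phi^{s,0}_{H(\frac\delta2 L),p}[\omega_{b_0}]$, and one only needs this to be $o(1)$ in $L$ --- a purely qualitative half-space statement (Lemma~\ref{lem:wired=free}), proved via Edwards--Sokal together with analyticity in the boundary field \cite{MMP84} and the Fr\"ohlich--Pfister wetting result \cite{FP87}, i.e.\ uniqueness of the half-space measure with positive boundary field, not bulk uniqueness. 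Some device of this kind (or a genuinely new quantitative argument) is needed to close the gap in your proposal.
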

 
For the sake of completeness, we include in Appendix~\ref{sec:appendix} the proof of positivity of the (wired) surface tension from \cite{LebPfi81} along with the comparison between boundary conditions leading to Theorem~\ref{thm:tilde_p_c=p_c}. The latter follows the same lines as \cite[Theorem 3.1]{Bod05}, with a slight simplification due to the fact that in our definition of $\tilde{p}_c$ the (free) boundary conditions are allowed to be at a macroscopic distance from the support of the relevant (dis)connection event, which is not the case in \cite{Bod05}.

Theorem~\ref{thm:main_Ising} then follows readily from Theorem~\ref{thm:tilde_p_c=p_c} and the following result, which concerns all FK models with $q\geq1$.

\begin{theorem}\label{thm:main_FK}
   One has $\hat{p}_c(q,d)=\tilde{p}_c(q,d)$ for all $q\geq1$ and $d\geq 3$. 
\end{theorem}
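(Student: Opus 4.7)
The plan is to establish the two inequalities separately; the reverse one $\tilde{p}_c \leq \hat{p}_c$ is comparatively soft and follows by a brick argument that paves the thin rectangle $\cR(L,\delta L)$ by slab-like regions each of which has a uniformly positive probability of vertical crossing, so I focus below on the main content, the direction $\hat{p}_c \leq \tilde{p}_c$. Fix $p > \tilde{p}_c$ and choose $p' \in (\tilde{p}_c, p)$. By definition of $\tilde{\tau}_{p'} > 0$ there exist $C \geq 1$, $\delta > 0$, $c > 0$ such that
\[
\phi^0_{\Lambda_{CL},p',q}\big[ \nlr{}{}{\partial^{bot}\cR(L,\delta L)}{\partial^{top}\cR(L,\delta L)} \big] \leq e^{-cL^{d-1}}
\]
for all $L$ large. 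Applying this to every coordinate direction and to suitable translates of $\cR(L,\delta L)$ inside $\Lambda_{CL}$, with probability $1-o(1)$ every such thin rectangle is crossed, and crossings in different orientations meet geometrically, so one obtains a ``typical'' macroscopic cluster in the box. The nontrivial point is the \emph{uniqueness} of this macroscopic cluster.

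I would obtain uniqueness by the Benjamini--Tassion sprinkling trick. Realize the $p$-configuration as the $p'$-configuration sprinkled by an independent collection of extra open edges. If with non-negligible probability two disjoint macroscopic clusters coexisted in the box at level $p'$, then the surface-tension estimate forces each of them to be spatially dense: each must visit a positive density of translates of $\cR(L,\delta L)$ inside $\Lambda_{CL}$, hence the two clusters are within one lattice step of one another at a linearly growing number of sites. The finite-energy property of FK measures with $q \geq 1$ then makes the sprinkling open at least one of the edges joining them with probability tending to $1$, contradicting disjointness. Therefore at level $p$ the macroscopic cluster in $\Lambda_{CL}$ is unique with probability $\to 1$.

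Closing the argument is a standard static renormalization. Tile $\Z^d$ by cubes of side $L$ and call a cube \emph{good} if it hosts a unique macroscopic cluster that is connected, inside the concentric cube of side $3L$, to the analogous clusters of all $2d$ adjacent cubes. By uniqueness, $\phi^0_{p,q}(\text{good}) \to 1$ as $L \to \infty$, and since ``good'' is a local event with bounded-range dependence, Liggett--Schonmann--Stacey yields stochastic domination of the good-cube indicators from below by a supercritical Bernoulli site percolation on $\Z^d$; restricting to any two coordinate directions, the latter percolates in the two-dimensional sub-slab of cubes, which translates back into slab percolation of the FK model in $\cS(L,\infty)$ and gives $p \geq \hat{p}_c$.

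The principal obstacle is the sprinkling step: the surface-tension estimate is ``soft'' and only controls disconnection across thin rectangles, whereas the merging argument demands a quantitative lower bound on the number of candidate edges between two hypothetical macroscopic clusters. Bridging this gap is precisely where the freedom in the definition of $\tilde{p}_c$ (placing the outer free boundary at macroscopic distance $CL$ from the rectangle) is used, since it allows one to average the disconnection estimate over many translates of $\cR(L,\delta L)$ sitting inside $\Lambda_{CL}$ and thereby locate, with overwhelming probability, many thin-rectangle crossings inside each of the two putative clusters, each of which supplies an independent chance of being merged by the sprinkling.
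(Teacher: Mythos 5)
Your high-level strategy (couple $\phi_{p}$ with $\phi_{p'}$ plus independent Bernoulli sprinkling, prove uniqueness of macroscopic clusters \`a la Benjamini--Tassion, then run a static renormalization) is indeed the route the paper takes, but the step that carries all the difficulty --- uniqueness via sprinkling --- is not correct as you state it. The hypothesis $\tilde{\tau}_{p'}>0$ does \emph{not} force each macroscopic cluster to visit a positive density of translates of $\cR(L,\delta L)$: what FKG extracts from the disconnection bound (Lemma~\ref{lem:surface_free_box}) is only that the \emph{union} of the clusters reaching $\partial\Lambda_{\delta L}$ is $\ell$-dense, with $\ell\asymp(\log L)^{\frac{1}{d-1}}$, and nothing localizes crossings inside a prescribed one of two hypothetical disjoint clusters. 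Consequently your claim that the two clusters ``are within one lattice step of one another at a linearly growing number of sites'' has no justification; the most one can extract (by an interface argument inside an annulus, using density of the union plus the fact that both clusters cross) is that they come within distance $O(\ell)$ of each other at about $\sqrt{L}/\ell$ disjoint places. So the sprinkling cannot merge them by opening single edges: it must open connecting paths of length of order $\ell\to\infty$, each succeeding only with probability $\varepsilon^{c\ell}$, and the argument closes only because $\ell=o(\log L)$ (this is where $d\geq3$ and the \emph{surface-order} nature of the bound enter), so that $\varepsilon^{c\ell}$ is sub-polynomial and $(\sqrt{L}/\ell)\,\varepsilon^{c\ell}\to\infty$. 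A second missing ingredient is how to handle the up to $CL^{d-1}$ boundary clusters: one cannot union bound over the exponentially many ways they could split into two groups, nor condition on the cluster decomposition and then use sprinkling randomness that this conditioning has already exposed. The paper resolves both issues with the progressive sprinkling over concentric annuli and the halving dichotomy of Lemma~\ref{lem:gluing} (either the number of equivalence classes halves, or some class of size $<4$ fails to glue, which keeps the union bound polynomial and keeps the relevant sprinkling independent of the conditioning event). Your closing paragraph honestly flags this obstacle, but the remedy you propose --- locating thin-rectangle crossings ``inside each of the two putative clusters'' by averaging over translates --- rests on the same unjustified densification of individual clusters, so the gap remains.

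Two further points, more minor. First, the renormalization must be run so as to bound $\phi^0_{\cS(L,N),p,q}$ uniformly in $N$ and in the target point, i.e.\ with free boundary conditions at distance of order $L$; this is why the uniqueness estimate has to be uniform in boundary conditions (the $\inf_\xi$ in Proposition~\ref{prop:unique}) and why the paper performs the renormalization directly inside the slab (Proposition~\ref{prop:slab}) rather than under the infinite-volume measure as in your sketch; also, the coupling realizing the $p$-configuration as the $p'$-configuration plus independent sprinkling is not automatic for FK measures and needs the short Russo-formula argument of Lemma~\ref{lem:stoc_dom}. Second, the reverse inequality $\tilde{p}_c\leq\hat{p}_c$ does not follow from bricks with ``uniformly positive probability of vertical crossing'': that only yields a positive crossing probability, whereas $\tilde{\tau}_p>0$ requires the disconnection probability to be $e^{-cL^{d-1}}$, and FKG goes the wrong way for bounding the decreasing disconnection event from above; this direction is classical but goes through Pisztora's coarse graining, as the paper cites.
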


Theorem~\ref{thm:main_FK} is similar to \cite[Theorem 2.2]{Bod05}, but our proof is completely different. We also stress that, due to the aforementioned difference between our definition of $\tilde{p}_c$ and that of \cite{Bod05}, our result is slightly stronger. Indeed, the proof of \cite[Thorem 2.2]{Bod05} relies on a delicate \emph{dynamic renormalization} scheme inspired by the work of Barsky, Grimmett \& Newman \cite{BarGriNew91} on Bernoulli percolation in the half space, which requires connections to go up to the boundary of the domain (with free boundary conditions). Our approach on the other hand is based on \emph{static renormalization}, for which a macroscopic distance from the boundary is typically harmless.

Our proof of Theorem~\ref{thm:main_FK} goes as follows. First, we observe that the surface order exponential cost for disconnection given by the condition $p>\tilde{p}_c$ implies that the clusters in $\Lambda_L$ touching $\partial \Lambda_L$ are $\ell$-dense in $\Lambda_L$ with high probability for $\ell=C (\log L)^{\frac{1}{d-1}} = o(\log L)$. Then we adapt a technique of Benjamini \& Tassion \cite{benjaminitassion17} to prove that all the clusters crossing an annulus get connected to each other with high probability after an $\varepsilon$-Bernoulli sprinkling -- see also \cite{DS23} for a very similar use of this technique for Voronoi percolation and \cite{DGRS19,DGRST23b} for more sophisticated arguments in the context of Gaussian Free Field level sets and random interlacements. Finally, we use this local uniqueness property to perform a standard static renormalization argument implying percolation in the slab.

\begin{remark}
For simplicity, we chose to focus on nearest-neighbor interactions, but it is straightforward to adapt all the proofs to finite range interactions as well.
\end{remark}

\paragraph{Acknowledgements.} I would like to thank Hugo Duminil-Copin, Ulrik Thinggaard Hansen and the anonymous referees for their helpful comments on earlier drafts of this paper. This research was supported by the European Research Council (ERC) under the European Union’s Horizon 2020 research and innovation program (grant agreement No 851565).

\section{From disconnection to slab percolation}

In this section we prove Theorem~\ref{thm:main_FK}. We fix $q\geq1$ and $d\geq3$ and henceforth omit them from the notation. As explained in Section~\ref{sec:intro}, the proof is split into two parts, which are done in separate subsections.

\subsection{Uniqueness with sprinkling}

We start with the following lemma, which is an easy consequence of the FKG inequality.

\begin{lemma}\label{lem:surface_free_box}
   If $\tilde{\tau}_p>0$, then there exists $\delta>0$ and such that for infinitely many $L$ one has 
    \begin{equation}\label{eq:surface_free_box}
        \phi^0_{\Lambda_{L},p}[\lr{}{}{\Lambda_\ell}{\partial \Lambda_{\delta L}}]\geq 1-e^{-\delta \ell^{d-1}} ~~~ \forall\, 1\leq \ell\leq \delta L.
    \end{equation}
\end{lemma}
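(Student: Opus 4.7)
The plan is to combine the definition of $\tilde\tau_p > 0$ with the FKG inequality and the coordinate symmetries of $\Z^d$. First, unpacking \eqref{eq:def_tau}, we extract constants $\delta_0,\tau > 0$, $C \ge 1$ and an infinite sequence of scales $L_n \uparrow \infty$ along which
\begin{equation*}
\phi^0_{\Lambda_{CL_n},p}\bigl[\partial^{bot}\cR(L_n,\delta_0 L_n) \nleftrightarrow \partial^{top}\cR(L_n,\delta_0 L_n)\bigr] \leq e^{-\tau L_n^{d-1}},
\end{equation*}
and, by the $\Z^d$-coordinate symmetries, the analogous bound holds for $\cR$ oriented along any of the $2d$ axis directions. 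The plan is to establish \eqref{eq:surface_free_box} at the scales $L = CL_n$ for a suitably small, fixed $\delta > 0$.

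The geometric core is the inclusion
\begin{equation*}
\{\Lambda_\ell \nleftrightarrow \partial\Lambda_{\delta L}\} \;\subseteq\; \{\text{the slab }[-\ell,\ell]^{d-1}\times[\ell,\delta L]\text{ is not crossed top-to-bottom}\},
\end{equation*}
since such a crossing would yield a connection from $\Lambda_\ell$ to $\partial\Lambda_{\delta L}$; the analogous inclusion holds in each of the $2d$ axis directions. When $\ell = L_n$ and $\delta = (1+2\delta_0)/C$, this slab is precisely a translate of $\cR(\ell,\delta_0\ell)$, so the surface-tension estimate directly yields $\phi^0_{\Lambda_L,p}[\Lambda_\ell \nleftrightarrow \partial\Lambda_{\delta L}] \le e^{-\tau\ell^{d-1}} \le e^{-\delta\ell^{d-1}}$ as soon as $\delta \le \tau$. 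For other $\ell$, one tiles the slab by $O(L/\ell)$ translates of $\cR(\ell,\delta_0\ell)$ stacked in the $e_d$-direction and combines the resulting surface-tension estimates via FKG, absorbing the polynomial prefactor into the exponent at the cost of a slight reduction of $\tau$.

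The main obstacle is to extend the estimate uniformly over all $\ell \in [1,\delta L]$, since the surface-tension bound is guaranteed only at the discrete sequence $(L_n)$. This is handled via two devices: (i) monotonicity of $\{\Lambda_\ell \leftrightarrow \partial\Lambda_{\delta L}\}$ in $\ell$ lets us downgrade $\ell$ to the nearest smaller good scale, absorbing the ratio loss into the constant $\delta$; (ii) for $\ell$ of order one, the target $1 - e^{-\delta\ell^{d-1}}$ is merely a positive constant, which follows from the existence of an infinite cluster under $\phi^0_p$ (since $p > \tilde p_c \ge p_c$) together with an FKG argument. A residual subtlety is the chaining of crossings along the stacked rectangles, which one handles either by a fat-crossing strengthening of the surface-tension event or by iterating the monotonicity argument at coarser scales. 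Assembling these pieces yields \eqref{eq:surface_free_box} at the infinitely many values $L = CL_n$.
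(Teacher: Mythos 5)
The geometric inclusion you start from, namely that $\{\Lambda_\ell \nleftrightarrow \partial\Lambda_{\delta L}\}$ forces the thin slab $[-\ell,\ell]^{d-1}\times[\ell,\delta L]$ to be uncrossed, yields a useful estimate only when $\ell$ is itself one of the good scales $L_n$, and you correctly flag the residual range $1\le\ell\le\delta L$ as the real difficulty. However, neither of your two devices closes that gap. For the tiling/chaining step, stacking sub-slabs of cross-section $\ell^{d-1}$ inside the tube gives an inclusion in the wrong direction: if each sub-slab is uncrossed the tube is certainly uncrossed, so applying FKG to the sub-slab disconnections produces a \emph{lower} bound on the tube-disconnection probability, whereas you need an upper bound; and indeed for $\ell\ll L$ the tube-disconnection probability is close to $1$, so no upper bound of the claimed form is available from this event. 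For the monotonicity downgrade, the $\limsup$ in \eqref{eq:def_tau} only guarantees the surface-tension estimate along some (possibly very sparse) sequence $L_n$, so "the nearest smaller good scale" may be smaller than $\ell$ by an unbounded factor, and the loss in the exponent cannot be absorbed into a fixed $\delta$.

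The mechanism that your proposal is missing is an averaging (pigeonhole) step that converts the surface-order cost at scale $L$ into one at scale $\ell$, uniformly over $\ell$. The paper covers the mid-hyperplane of the slab by $m\lesssim (L/\ell)^{d-1}$ boxes $\Lambda_\ell(x_1),\dots,\Lambda_\ell(x_m)$ and notes that the joint occurrence of $\{\Lambda_\ell(x_i)\nleftrightarrow\partial\Lambda_{\delta' L}(x_i)\}$ for all $i$ forces disconnection of the slab. Since these are decreasing events, FKG together with the surface-tension bound gives
\begin{equation*}
\prod_{i=1}^{m}\phi^0_{\Lambda_{C'L},p}\big[\Lambda_\ell(x_i)\nleftrightarrow\partial\Lambda_{\delta' L}(x_i)\big]\le e^{-\delta' L^{d-1}},
\end{equation*}
and dividing the exponent by $m\lesssim(L/\ell)^{d-1}$ produces some $i_0$ with $\phi^0_{\Lambda_{C'L},p}[\Lambda_\ell(x_{i_0})\nleftrightarrow\partial\Lambda_{\delta' L}(x_{i_0})]\le e^{-c\ell^{d-1}}$. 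This trades the $L^{d-1}$ budget for an $\ell^{d-1}$ one automatically, for every $1\le\ell\le\delta L$, with no restriction to special scales. A comparison of boundary conditions and translation invariance then finish the proof. Without this averaging step, the estimate in \eqref{eq:surface_free_box} cannot be extracted from \eqref{eq:def_tau} by the route you describe.
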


\begin{proof}
   Since $\tilde{\tau}_p>0$, there exist $C'\geq1$ and $\delta'>0$ such that for infinitely many $L$ one has
   \begin{equation}\label{eq:surface_free}
       \phi^0_{\Lambda_{C'L},p}[ \nlr{}{}{\partial^{bot} \cR(L, \delta' L)}{\partial^{top} \cR(L, \delta' L)} ] \leq e^{-\delta' L^{d-1} }.
   \end{equation} 
   Cover the hyperplane $\{-C'L,\ldots,C'L\}^{d-1}\times \{0\}$ by $m\leq C''({L}/{\ell})^{d-1}$ boxes $\Lambda_\ell(x_1),\ldots,\Lambda_\ell(x_m)$. Notice that $$\bigcap_{i=1}^{m} \{\nlr{}{}{\Lambda_\ell(x_i)}{\Lambda_{\delta' L}(x_i)}\} \subset \{\nlr{}{}{\partial^{bot} \cR(L, \delta' L)}{\partial^{top} \cR(L, \delta' L)}\}.$$ Therefore, by the FKG inequality together with \eqref{eq:surface_free}, we can find an $i_0\in\{1,\ldots,m\}$ such that 
   $$\phi^0_{\Lambda_{C'L},p}[\nlr{}{}{\Lambda_\ell(x_{i_0})}{\partial \Lambda_{\delta' L}(x_{i_0})}]\leq e^{-\delta' \ell^{d-1}/C''}.$$ 
   By comparison between boundary conditions we obtain $$\phi^0_{\Lambda_{2C'L}(x_i),p}[\nlr{}{}{\Lambda_\ell(x_{i_0})}{\partial \Lambda_{\delta' L}(x_{i_0})}]\leq e^{-\delta' \ell^{d-1}/C''},$$ 
   which does not depend on $x_{i_0}$, so the result follows with $\delta=\delta' \min\{1/2C', 1/C''\}$.
\end{proof}

Given $p\in[0,1]$, $\varepsilon>0$, $\Lambda\subset \Z^d$ and boundary conditions $\xi$ on $\Lambda$, let $\omega \sim_d \phi^\xi_{\Lambda,p}$ and $\gamma \sim_d \otimes_{x\in E(\Lambda)} \textrm{Ber}(\varepsilon)$ be independent random variables, and denote its joint distribution on $\Omega_\Lambda^2$ by $\psi^\xi_{\Lambda,p,\varepsilon}$.

The following proposition is the heart of our proof. It is inspired by the work of Benjamini \& Tassion \cite{benjaminitassion17}, where it was proved that any ``everywhere percolating'' subgraph of $\Z^d$ becomes connected after an $\varepsilon$-Bernoulli sprinkling. We adapt their proof to the case of a very dense subgraph. 

\begin{proposition}\label{prop:unique}
    Let $\mathrm{Unique}(L)$ be the event that there is a cluster in $\omega\cap \Lambda_{L}$ crossing $\Lambda_L\setminus \Lambda_{L/8}$ and that every cluster of $\omega\cap \Lambda_L$ crossing $\Lambda_{L/2}\setminus \Lambda_{L/4}$ are connected to each other in $(\omega\cup\gamma)\cap \Lambda_{L/2}$.
    For $p>\tilde{p}_c$ there exists $\delta>0$ such that for all $\varepsilon>0$ one has
    \begin{equation*}
       \limsup_{L\to\infty} \inf_\xi \psi^\xi_{\Lambda_{L},p,\varepsilon}[\mathrm{Unique}(\delta L)]\to 1.
    \end{equation*}
\end{proposition}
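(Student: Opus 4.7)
The plan is to combine Lemma~\ref{lem:surface_free_box} with the sprinkling technique of Benjamini and Tassion \cite{benjaminitassion17} in three steps. Throughout, we work along the infinite sub-sequence of $L$'s supplied by Lemma~\ref{lem:surface_free_box}, with $\delta>0$ as in that lemma.

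\emph{Step 1 (polylogarithmic density).} Set $\ell=\ell(L)\coloneqq\lceil C_0(\log L)^{1/(d-1)}\rceil$ with $C_0$ large enough that $e^{-\delta\ell^{d-1}}\le L^{-3d}$. Since $\{\lr{}{}{\Lambda_\ell(x)}{\partial\Lambda_{\delta L}}\}$ is increasing, monotonicity in boundary conditions extends \eqref{eq:surface_free_box} from free to arbitrary $\xi$. A union bound over an $\ell$-spaced grid $\mathcal{G}\subset\Lambda_{\delta L/2}$ (of cardinality at most $L^d$) yields
\[
\mathcal{D}_L\coloneqq\bigcap_{x\in\mathcal{G}}\bigl\{\lr{}{}{\Lambda_\ell(x)}{\partial\Lambda_{\delta L}}\text{ in }\omega\cap\Lambda_L\bigr\}, \qquad \inf_\xi\phi^\xi_{\Lambda_L,p}[\mathcal{D}_L]\to 1.
\]
On $\mathcal{D}_L$, every $\omega$-cluster of $\omega\cap\Lambda_{\delta L}$ that crosses $\Lambda_{\delta L/2}\setminus\Lambda_{\delta L/4}$ meets at least one point of $\mathcal{G}$.

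\emph{Step 2 (existence and pointwise reduction).} Part (a) of $\mathrm{Unique}(\delta L)$ holds on $\mathcal{D}_L$, since the $\omega$-cluster of $\Lambda_\ell(0)$ already crosses $\Lambda_{\delta L}\setminus\Lambda_{\delta L/8}$. For part (b), it is enough to prove that for every $x\in\mathcal{G}$, the $\omega$-clusters $\mathcal{C}(x)$ and $\mathcal{C}(0)$ of $x$ and $0$ in $\omega\cap\Lambda_{\delta L}$ lie in the same $(\omega\cup\gamma)$-cluster inside $\Lambda_{\delta L/2}$ with probability at least $1-L^{-d-1}$; a union bound over $\mathcal{G}$ then concludes.

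\emph{Step 3 (sprinkled merging).} Fix $x\in\mathcal{G}$ and condition on $\mathcal{D}_L\cap\{\mathcal{C}(x)\ne\mathcal{C}(0)\}$. Adapting \cite{benjaminitassion17,DS23}, we produce pairwise edge-disjoint $\ell$-boxes $B_1,\dots,B_N\subset\Lambda_{\delta L/2}$ with $N\ge c(L/\ell)^{d-1}$, each carrying pieces of two distinct crossing clusters at mutual distance $\le 2\ell$, such that merging those two pieces in $B_j$ decreases by one the number of crossing clusters. Inside each $B_j$, the required merging is realized by the opening of an explicit set of $O(\ell)$ edges in $\gamma$, an event of probability $\ge\varepsilon^{O(\ell)}=L^{-o(1)}$; these events are independent across $j$ by the product structure of $\gamma$ together with edge-disjointness of the $B_j$. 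Since the number $K$ of crossing clusters is at most $(L/\ell)^d$, iterating the reduction at most $K$ times merges $\mathcal{C}(x)$ to $\mathcal{C}(0)$, and the overall failure probability is at most $K(1-\varepsilon^{O(\ell)})^{N/K}\le\exp(-L^{d-1-o(1)})\ll L^{-d-1}$, as needed.

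The main obstacle is the geometric/combinatorial construction of the merging boxes in Step 3: one must show that, uniformly over the (random) collection of crossing clusters, there exists an "interface structure" supporting enough disjoint merging opportunities between $\mathcal{C}(x)$ and $\mathcal{C}(0)$ --- possibly using intermediate crossing clusters as stepping stones. This is precisely where the polylogarithmic density from Step 1 is essential; the remaining ingredients (monotonicity in boundary conditions, product independence of $\gamma$, passage to the sub-sequence) are routine.
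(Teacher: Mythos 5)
Your Steps 1 and 2 are essentially the paper's set-up (the density event at scale $\ell=C_0(\log L)^{1/(d-1)}$, uniformity in $\xi$ by monotonicity, existence of a crossing cluster), modulo two small inaccuracies that would need fixing: a cluster crossing $\Lambda_{\delta L/2}\setminus\Lambda_{\delta L/4}$ need not contain a point of the $\ell$-spaced grid (it only meets many boxes $\Lambda_\ell(x)$), and the cluster of the origin itself may be microscopic, so the reduction must be phrased in terms of clusters meeting the boxes $\Lambda_\ell(x)$ and reaching $\partial\Lambda_{\delta L}$, not in terms of $\mathcal{C}(0)$ and $\mathcal{C}(x)$. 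The real problem is that Step 3, which is the entire content of the proposition, is asserted rather than proved, and the assertions made there are not correct as stated. First, the existence of $N\geq c(L/\ell)^{d-1}$ pairwise disjoint merging boxes is unjustified and, in general, false: knowing that two cluster families both cross an annulus and that their \emph{union} is $\ell$-dense only produces of order (annulus width)$/\ell$ boxes where the two sides come within distance $O(\ell)$ of each other (one per concentric shell, by a connectivity argument along a shell); two fixed crossing clusters need not be $2\ell$-close along anything like a $(d-1)$-dimensional set. This is why the paper restricts to a shell of width $\sqrt{L}$ and is content with $n\asymp\sqrt{L}/\ell$ boxes, which still beats the polynomial cost $\varepsilon^{4d\ell}=L^{-o(1)}$. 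Second, even taking your own counts at face value, with $N\leq (L/\ell)^{d-1}$ and $K\leq (L/\ell)^{d}$ you have $N/K<1$, so the bound $K(1-\varepsilon^{O(\ell)})^{N/K}\leq\exp(-L^{d-1-o(1)})$ does not follow from anything written; the arithmetic of the final estimate is inconsistent.

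Third, and most importantly, the iteration ``repeat the reduction at most $K$ times'' hides a conditioning problem: after the first round, the event that a given box still offers a merging opportunity between two \emph{currently distinct} $(\omega\cup\gamma)$-clusters depends on the $\gamma$-bits already revealed elsewhere, so the independence you invoke (product structure of $\gamma$ plus edge-disjointness of the $B_j$) does not apply across rounds, and no exploration/martingale structure is set up to replace it. Resolving exactly this adaptivity is the point of the Benjamini--Tassion scheme as implemented in the paper: the sprinkling is revealed progressively on concentric annuli $V_i\setminus V_{i+1}$ of width $\sqrt{L}$, the count $U_i$ of equivalence classes of crossing clusters after sprinkling outside $V_i$ is measurable with respect to $(\omega,\gamma\cap(V_0\setminus V_i))$ and hence independent of the fresh sprinkling used in the next gluing step, and the gluing lemma proves the dichotomy that either $U_{i+8}\leq U_i/2$ or some group of fewer than $4$ classes remains isolated --- which simultaneously keeps the union over possible ``bad partitions'' polynomial (you take no precaution of this kind, and a union over arbitrary sub-families of up to $CL^{d-1}$ clusters would be super-exponential) and lets one conclude from $U_0\leq CL^{d-1}$ after $O(\log L)\ll\sqrt{L}$ halvings. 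Without an argument playing the role of this progressive revelation and halving dichotomy, your Step 3 is a restatement of the difficulty rather than a proof of it.
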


\begin{proof}
    We follow closely the proof and notation of \cite[Proposition 4.1]{DS23}, where a similar result is proved for Voronoi percolation.
    Fix $p>\tilde{p}_c$ and $\varepsilon>0$. 
    Let $\delta'>0$ be given by Lemma~\ref{lem:surface_free_box} and $L'$ such that \eqref{eq:surface_free_box} holds. Set $L\coloneqq2L'$, $\delta\coloneqq\delta'/4$, $C_0\coloneqq (d\delta')^{-\frac{1}{d-1}}$ and 
    $$\ell=\ell(L,p)\coloneqq C_0(\log L)^{\frac{1}{d-1}}.$$ 
    By monotonicity on boundary conditions and the inequality \eqref{eq:surface_free_box}, we obtain $$\psi_{\Lambda_L}^\xi[\lr{}{\omega}{\Lc_\ell(x)}{\partial \Lambda_{\delta L}}] \geq \phi_{\Lambda_{L'}}^0[\lr{}{}{\Lc_\ell}{\partial \Lambda_{\delta' L'}}]\geq 1-L^{-d}$$ 
    for all $x\in\Lambda_{\delta L}$ and all $\xi$. 
    By union bound, one concludes that the event 
    $$\mathcal{A}_L\coloneqq \bigcap_{x\in \ell\Z^d\cap\Lc_{\delta L}} \{\lr{}{\omega}{\Lc_\ell(x)}{\partial \Lc_{\delta L}}\}$$
    satisfies $$\limsup_{L\to\infty} \inf_\xi \psi_{\Lambda_L}^\xi[\mathcal{A}_L]=1.$$ 
    It remains to prove that $\mathrm{Unique}(\delta L)$ happens with high probability under $\psi^\xi$ (uniformly in $\xi$) conditionally on $\mathcal{A}_L$.
    First, notice that the existence of a cluster of $\omega\cap \Lambda_{\delta L}$ crossing $\Lambda_{\delta L}\setminus \Lambda_{\delta L/8}$ is automatically implied by $\mathcal{A}_L$, so we only need to focus on the uniqueness part. 
    Consider the set of boundary clusters
    \begin{equation*} 
    \mathcal{C}\coloneqq \{C\subset \Lc_{\delta L}:~ \text{$C$ is a cluster in $\omega\cap\Lc_{\delta L}$ such that $C\cap\partial \Lc_{\delta L/2}\neq\emptyset$}\}.
    \end{equation*} 
    Obviously, $|\mathcal{C}|\leq |\partial \Lambda_{\delta L/2}|\leq CL^{d-1}$. Given $\omega\subset \eta \subset E(\Z^d)$, we define the relation $\sim_\eta$ in $\mathcal{C}$ by setting
    \begin{equation*}
    C\sim_\eta C' \text{ if } \lr{\Lc_{\delta L/2}}{\eta}{C}{C'}.
    \end{equation*}
    It is enough to prove that, with high probability conditionally on $\mathcal{A}_L$, all the clusters in $\mathcal{C'}\coloneqq\{C\in\mathcal{C}:~ C\cap\Lc_{\delta L/4}\neq\emptyset\}$ are connected to each other in $(\omega\cup \gamma) \cap\Lc_{\delta L}$, or equivalently $|\mathcal{C}'/\sim_{\omega\cup\gamma}|=1$.
    
    Let $V_i\coloneqq \Lc_{\delta L/2-i\sqrt{L}}$, $0\leq i\leq \lfloor\delta\sqrt{L}/4\rfloor$. We will add $\gamma$ to $\omega$ progressively on each annulus $V_{i}\setminus V_{i+1}$. For every $0\leq i\leq \lfloor\delta \sqrt{L}/4\rfloor$, set 
    \begin{equation*}
        \eta_i\coloneqq \omega\cup (\gamma\cap (V_0\setminus V_i)).
    \end{equation*}
    Given $\eta\supset \omega$, let
    $$\mathcal{U}_i(\eta)\coloneqq \mathcal{C}_i/\sim_\eta \,,$$
    where 
    $$\mathcal{C}_i\coloneqq \{C\in\mathcal{C}:~ C\cap V_i\neq\emptyset\}.$$
    Finally, we set 
    $$U_i\coloneqq |\mathcal{U}_i(\eta_i)|.$$
    Notice that it is enough to prove that $U_{\lfloor\delta \sqrt{L}/4\rfloor}=1$ with high probability conditionally on $\mathcal{A}_L$, which follows from the following lemma.

    \begin{lemma}\label{lem:gluing}
        There exists $c>0$ such that for every $\xi$ and every $0\leq i\leq \lfloor \delta\sqrt{L}/4\rfloor-8$, one has
        \begin{equation}\label{eq:gluing}
            \psi_{\Lambda_L,p,\varepsilon}^\xi[\{U_{i+8}> 1\vee U_i/2\} \cap \mathcal{A}_L]\leq e^{-cL^{1/4}}.
        \end{equation}
    \end{lemma}
    \noindent  By Lemma~\ref{lem:gluing} together with a union bound, the following event occurs with high probability
    \[\bigcap_{0\leq i\leq \lfloor\sqrt{L}/4\rfloor-8} \{U_{i+8}\le  1\vee U_i/2\} \cap \mathcal{A}_L.\]
    On this event, $U_{\lfloor\delta\sqrt{L}/4\rfloor}>1$ would imply $U_0\ge 2^ {\lfloor\delta\sqrt{L}/32\rfloor-1}$, which contradicts the fact that $U_0\leq CL^{d-1}$. This yields that $U_{\lfloor\delta\sqrt{L}/4\rfloor}=1$ on this event, thus concluding the proof. 
\end{proof}

\begin{figure}[!h]
    \centering
    \includegraphics[width=0.65\textwidth]{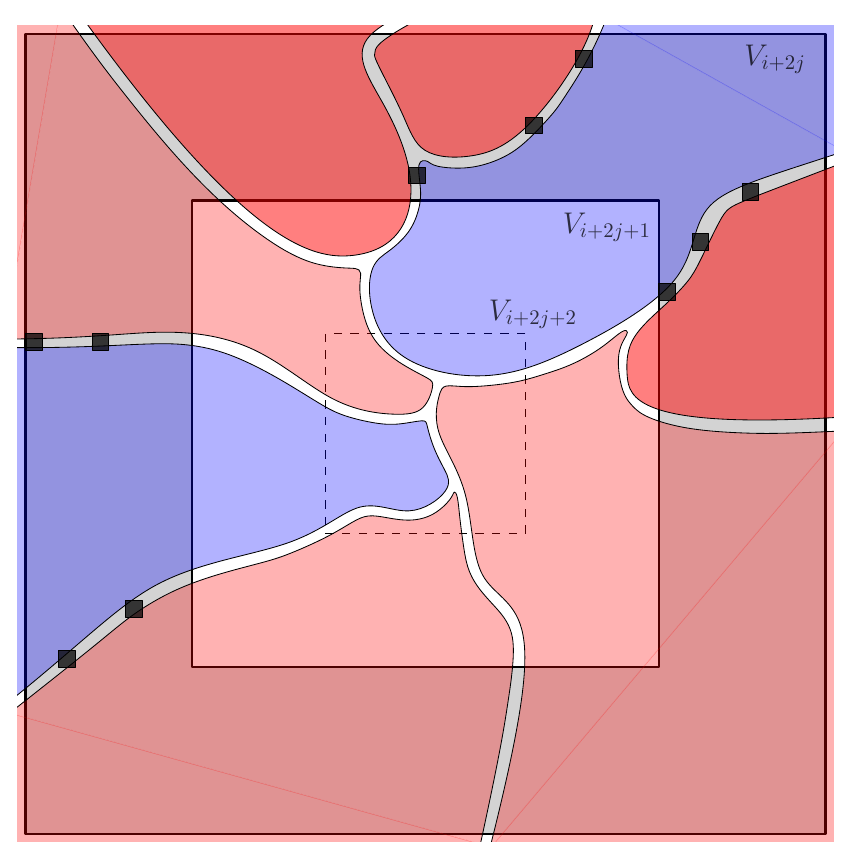}
    \caption{\label{steering} The gluing procedure of Lemma~\ref{lem:gluing}. The blue and red clusters represent $\tilde{\mathcal{U}}_1$ and $\tilde{\mathcal{U}}_2$, respectively. The dark red clusters represent the set $\tilde{C}$: we are therefore in the case where $\tilde{C}\cap V_{i+2j+1}\neq\emptyset$, and $A$ is the grey annulus. The black squares represent the boxes  $(D_k)_{k=1}^n$, of radius $2\ell =o(\log L)$, where $\tilde{\mathcal{U}}_1$ and $\tilde{\mathcal{U}}_2$ meet.}
\end{figure}

It remains to give the following proof.

\begin{proof}[Proof of Lemma~\ref{lem:gluing}]
    We fix $\xi$ and write $\psi$ instead of $\psi^\xi_{\Lambda_L,p,\varepsilon}$ for simplicity; all estimates will be uniform on $\xi$. 
    Fix $0\leq i\leq \lfloor \delta\sqrt{L}/4\rfloor-8$. 
    We start by finding a sub-annulus of $V_i\setminus V_{i+8}$ such that most clusters in it are crossing. For every $\eta\supset \omega$ and $j\in\{0,1,2,3\}$, let
    $$\mathcal{U}_i^j(\eta)\coloneqq \{C\in\mathcal{U}_i(\eta):~ C\cap V_{i+2j}\neq\emptyset \text{ and } C\cap V_{i+2j+2}=\emptyset\}.$$
    In the definition above, we abuse the notation by identifying the equivalence class of cluster $C\in\mathcal{U}_i(\eta)$ with its associated $\eta$-cluster. Since $(\mathcal{U}_i^j(\eta))_j$ are disjoint subsets of $\mathcal{U}_i(\eta)$, we can find $j\in\{0,1,2,3\}$ such that $|\mathcal{U}_i^j(\eta_i)|\leq |\mathcal{U}_i(\eta_i)|/4=U_i/4$. We fix such a $j$ for the rest of the proof and focus on the annulus $V_{i+2j}\setminus V_{i+2j+2}$. 
    
    We now further restrict to one of the two sub-annuli $V_{i+2j}\setminus V_{i+2j+1}$ and $V_{i+2j+1}\setminus V_{i+2j+2}$ as follows. If at least one of the clusters in $\mathcal{U}_i^j(\eta_i)$ touches $V_{i+2j+1}$, we ``wire'' all clusters in $\mathcal{U}_i^j(\eta_i)$ (i.e.~we treat their union as a single element) and focus on the annulus $V_{i+2j}\setminus V_{i+2j+1}$. Otherwise, we forget about all the clusters in $\mathcal{U}_i^j(\eta_i)$ and focus on the annulus $V_{i+2j+1}\setminus V_{i+2j+2}$.
    More precisely, consider the family
    \begin{equation*}
        \tilde{\mathcal{U}}\coloneqq \left\{ \begin{array}{ll}
    \{C\in\mathcal{U}_i(\eta_i):~ C\cap V_{i+2j+2}\neq\emptyset\}\cup \big\{\tilde{C}\big\},     &  ~~\text{if } \tilde{C}\cap V_{i+2j+1}\neq \emptyset, \\
    \{C\in\mathcal{U}_i(\eta_i):~ C\cap V_{i+2j+2}\neq\emptyset\},     & ~~\text{otherwise.}
    \end{array}
    \right.
    \end{equation*}
    where $\tilde{C}\coloneqq \cup_{C\in\mathcal{U}_i^j(\eta_i)} C$ if $\mathcal{U}_i^j(\eta_i)\neq \emptyset$ and $\tilde{C}=\emptyset$ otherwise, and the annulus
    \begin{equation*}
        A\coloneqq \left\{ \begin{array}{ll} V_{i+2j}\setminus V_{i+2j+1},  & ~~\text{if } \tilde{C}\cap V_{i+2j+1}\neq \emptyset, \\
    V_{i+2j+1}\setminus V_{i+2j+2}    & ~~\text{otherwise.}\end{array}\right.
    \end{equation*}
Notice that in any case, every element of  $\tilde{\mathcal{U}}$ contains a crossing of the annulus $A$, and that every box $\Lc_\ell(x)\subset A$, $x\in\ell\Z^d$, intersects at least one element of $\tilde{\mathcal{U}}$. 
Define the following partition of $\tilde{ \mathcal U}$. Let $\overline {\mathcal U}_0$ be the set made of all the elements of $\tilde{ \mathcal U}$ connected to $\tilde C$ in $\eta_{i+8}$ and $\overline{ \mathcal U}_1,\dots,\overline {\mathcal U}_m$ be the partition of $\tilde {\mathcal U}\setminus \overline{ \mathcal U}_0$ induced by the equivalence relation $\sim_{\eta_{i+8}}$. 
If we assume that $|\overline {\mathcal U}_k|\ge 4$ for all $1\le k \le m $, then we get
    \[U_{i+8}\le\frac 1 4 |\tilde {\mathcal U}\setminus \overline{ \mathcal U}_0|+|\mathcal{U}_i^j(\eta_i)|\le \frac 1 2U_i.\]
This cannot occur on the event $\mathcal{E}\coloneqq \{U_{i+8}> 1\vee U_i/2\} \cap \mathcal{A}_L$.
As a result, if $\mathcal{E}$ happens, then there exists a non-trivial partition $\tilde{\mathcal{U}}=\tilde{\mathcal{U}}_1\cup \tilde{\mathcal{U}}_2$ such that $|\tilde{\mathcal{U}}_1|< 4$ and $\nlr{A}{\eta_{i+8}}{\tilde{\mathcal{U}}_1}{\tilde{\mathcal{U}}_2}$, where here we abuse the notation again by identifying $\tilde{\mathcal{U}}_1$ and $\tilde{\mathcal{U}}_2$ with the union of the associated $\eta_i$-clusters.
As a conclusion, we have
\begin{equation}\label{eq:gluing_proof1}
    \psi[\mathcal{E}]\leq \sum_{\tilde{U}} \psi[\{\tilde{\mathcal{U}}=\tilde{U}\}\cap\mathcal{A}_L] \, \Big( \sum_{\substack{\tilde{U}=\tilde{U}_1\sqcup \tilde{U}_2 \\ 1\leq|\tilde U_1|< 4}} \psi[\,\nlr{A}{\eta_{i+8}}{\tilde U_1}{\tilde U_2} \,|\, \{\tilde{\mathcal{U}}=\tilde{U}\}\cap\mathcal{A}_L] \Big),
\end{equation}
where the sum in $\tilde{U}$ runs over all the (finitely many) possibilities for $\tilde{\mathcal{U}}$ which are compatible with $\mathcal{A}_L$.
    
Fix a set $\tilde{U}$ and a partition $\tilde{U}=\tilde{U}_1\sqcup \tilde{U}_2$ as in \eqref{eq:gluing_proof1}. Recall that by construction both $\tilde{U}_1$ and $\tilde{U}_2$ contain a crossing of the annulus $A$ and their union is $\ell$-dense in $A$. Therefore, by splitting $A$ into annuli of thickness $10\ell$, one can find vertices $x_1,\ldots,x_n\in  \ell \Z^d$, $n\geq c_d\sqrt{L}/\ell$, such that the boxes $D_k\coloneqq \Lambda_{2\ell}(x_k)\subset A$, $j\in\{1,\ldots,n\}$, are disjoint and each one intersects both $\tilde{U}_1$ and $\tilde{U}_2$. In particular, on each of these boxes, there exists a path of length at most $4d\ell$ between $\tilde{U}_1$ and $\tilde{U}_2$ which is fully open in $\gamma$ with probability at least $\varepsilon^{4d\ell}$.
Notice that the event $\{\tilde{\mathcal{U}}=\tilde{U}\}\cap \mathcal{A}_L$ is $(\omega,\eta_i)$-measurable and therefore independent of $\gamma\cap (V_i\setminus V_{i+8})$.
By independence, we conclude that
\begin{equation}\label{eq:gluing_proof2}
    \begin{split}
        \psi[\, \nlr{A}{\eta_{i+8}}{\tilde{U}_1}{\tilde{U}_2} \,|\, \{\tilde{\mathcal{U}}=\tilde{U}\}\cap\mathcal{A}_L] &\leq \psi\Big[\bigcap_{k=1}^{n} \nlr{D_k}{\gamma}{\tilde{U}_1}{\tilde{U}_2}\Big]
        \leq \big(1-\varepsilon^{4d\ell}\big)^n
        \leq e^{-cL^{1/4}},
        \end{split}
\end{equation}
for some constant $c>0$, where we have used that $n \varepsilon^{4d\ell}\gg L^{1/4}$.  
Combining \eqref{eq:gluing_proof1}, \eqref{eq:gluing_proof2} and the fact that $|\tilde U|\leq CL^{d-1}$, we obtain
$\psi[\mathcal{E}]\leq \sum_{k=1}^3 {CL^{d-1}\choose k} e^{-cL^{1/4}}\leq e^{-c'L^{1/4}}$,
for some $c'>0$ and $L$ large enough, thus concluding the proof. 
\end{proof}

\subsection{Static renormalization}

\begin{proposition}\label{prop:slab}
    For every $p>\tilde{p}_c$ and $\varepsilon>0$, there exists $L\geq1$ such that
    \begin{equation*}
        \inf_{N}\inf_{x\in \cS(L,N)} \psi^0_{\cS(L,N),p,\varepsilon} (\lr{}{\omega\cup \gamma}{0}{x})>0.
    \end{equation*}
\end{proposition}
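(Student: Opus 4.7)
The plan is to use Proposition~\ref{prop:unique} at a fixed large scale $L_0$ as the input to a static block renormalization in the two ``long'' directions of the slab, combined with a standard two-dimensional dependent site percolation argument. Fix $p>\tilde{p}_c$ and $\varepsilon>0$, let $\delta>0$ be as in Proposition~\ref{prop:unique}, and let $L_0$ be a large scale to be chosen. Set $r:=\lfloor \delta L_0/16\rfloor$, and for $v=(v_1,v_2)\in\Z^2$ put $x_v:=(0,\ldots,0,rv_1,rv_2)\in\Z^d$ and $B_v:=x_v+\Lc_{L_0}$. I would call $v$ \emph{good} if the event $\mathrm{Unique}(\delta L_0)$, translated to be centered at $x_v$ and read off from $\omega\cap B_v$ and $\gamma\cap B_v$, occurs. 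By monotonicity in boundary conditions for the FK measure (applied to the conditional law on $B_v$ given the outside) combined with Proposition~\ref{prop:unique}, this gives a uniform bound $\psi^0_{\cS(L_0,N),p,\varepsilon}[v\text{ not good}]\le p^\ast(L_0)$ with $p^\ast(L_0)\to 0$ as $L_0\to\infty$, independently of $N$, of $v$, and of the configuration outside $B_v$.

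The key step will be a deterministic geometric gluing: if $v,v'\in\Z^2$ are both good with $\|v-v'\|_\infty=1$, then the main crossing clusters of $B_v$ and $B_{v'}$ are connected in $(\omega\cup\gamma)\cap(B_v\cup B_{v'})$. Since $|x_v-x_{v'}|\le\sqrt 2\, r<\delta L_0/8$, any cluster of $\omega\cap B_{v'}$ crossing the annulus $\Lc_{\delta L_0}(x_{v'})\setminus \Lc_{\delta L_0/8}(x_{v'})$ contains a sub-path entering $\Lc_{\delta L_0/4}(x_v)$ and exiting through $\partial\Lc_{\delta L_0/2}(x_v)$; that sub-path is entirely contained in $B_v$, so it lies in some cluster of $\omega\cap B_v$ that crosses the characteristic annulus of $v$, which by goodness of $v$ is identified, after sprinkling inside $\Lc_{\delta L_0/2}(x_v)$, with the main cluster of $B_v$. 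Keeping the four nested scales $\delta L_0/8<\delta L_0/4<\delta L_0/2<\delta L_0$ straight and being careful about the distinction between ``cluster of $\omega\cap B_v$'' and ``cluster of $\omega\cap B_{v'}$'' is the step I expect to be the most delicate.

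Once the gluing is in place, the $\Z^2$-indexed family $\{v\text{ good}\}$ is $K$-dependent (good-ness of $v$ depends only on $B_v$, which overlaps with only finitely many neighbors), with marginal density arbitrarily close to $1$ as $L_0\to\infty$. By the Liggett--Schonmann--Stacey domination theorem, choosing $L_0$ large enough makes this family stochastically dominate supercritical Bernoulli site percolation on $\Z^2$ at some density $p'>p^{\mathrm{site}}_c(\Z^2)$; FKG then yields a uniform lower bound $\ge \theta(p')^2>0$ for the probability that any two $\Z^2$-sites in the projection of $\cS(L_0,N)$ are connected in the good subgraph. To finish, I would connect the endpoints $0$ and $x$ to the block cluster using the finite-energy property of the Bernoulli sprinkling: picking $v_x$ with $x\in B_{v_x}$, one has $0$ (resp.\ $x$) at distance $O(L_0)$ from some point in the main cluster of $B_0$ (resp.\ $B_{v_x}$), and a deterministic nearest-neighbor path of that length is fully open in $\gamma$ with probability at least $\varepsilon^{O(L_0)}>0$. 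Combining the four ingredients yields $\psi^0_{\cS(L_0,N),p,\varepsilon}(\lr{}{\omega\cup\gamma}{0}{x})\ge c(L_0,\varepsilon)>0$ uniformly in $N$ and $x$, proving the claim with $L=L_0$.
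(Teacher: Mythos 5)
Your proposal is correct and follows essentially the same route as the paper: good blocks defined by $\mathrm{Unique}(\delta L)$ at centers spaced $\asymp \delta L$ apart along the two unbounded directions of the slab, a uniform-in-environment bound on block goodness, Liggett--Schonmann--Stacey domination by supercritical site percolation on $\Z^2$, deterministic gluing of neighboring good blocks, and a $\gamma$-sprinkling (finite-energy) step of cost $\varepsilon^{O(L^d)}$ to attach $0$ and $x$. The only wording to adjust is the justification of the uniform goodness bound: since $\mathrm{Unique}$ is not a monotone event, monotonicity in boundary conditions is not the right tool; what you need (and what the paper uses) is the domain Markov property combined with the fact that Proposition~\ref{prop:unique} already takes the infimum over all boundary conditions $\xi$.
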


\begin{proof}
    On the slab box $\cS(L,N)$, consider the sites of the form $x(u)=(0,\ldots,0, \frac{\delta}{8}Lu)$ for $u\in B_n\coloneqq \{-n,\ldots,n\}^2$, where $n\coloneqq \lfloor \frac{8(N-L)}{\delta L}\rfloor$ is chosen so that $\Lambda_L(x(u))\subset \cS(L,N)$ for all $u\in B_n$. We construct a site percolation model $\eta$ on $B_n$ under the measure $\psi^0_{\cS(L,N),p,\varepsilon}$ as follows. We set $\eta_u=1$ if the event $\mathrm{Unique}(\delta L)$ happens when centered at $x(u)$. First notice that, by the definition of $\mathrm{Unique}(\delta L)$, a path of $\eta$-open sites from $u$ to $v$ induces a path in $\omega\cup\gamma$ from $\Lambda_{\delta L/8}(x(u))$ to $\Lambda_{\delta L/8}(x(v))$. Second, by the Markov property one has
    \begin{equation*}
        \psi^0_{\cS(L,N),p,\varepsilon}[\eta_u=1 \,\big|\, (\eta_v:\, \|u-v\|_\infty\geq 16/\delta)] \geq \alpha(L) ~~~\text{a.s. } \forall u\in B_n,
    \end{equation*}
    where $\alpha(L)\coloneqq \inf_\xi \psi^\xi_{\Lambda_{L},p,\varepsilon}[\mathrm{Unique}(\delta L)]$. Since $\limsup_{L\to\infty} \alpha(L)= 1$ by Proposition~\ref{prop:unique}, the main result of \cite{LigSchSta97} implies that there exists an $L$ (which we fix now) such that $\eta$ stochastically dominates a product measure with parameter $s>p_c^{site}(\Z^2)$. In particular, there exists a constant $c>0$ such that $\psi^0_{\cS(L,N),p,\varepsilon}[\lr{}{\eta}{u}{v}]\geq c$ for all $u,v\in B_n$ and all $N$. Now, given any point $x\in\cS(L,N)$, there exists $u\in B_n$ such that $x\in \Lambda_{2L}(x(u))$, so that the event $\{\lr{}{\eta}{0}{u}\}\cap \{\Lambda_L \text{ fully open in } \gamma\} \cap \{\Lambda_{2L}(x(u))\cap\cS(L,N) \text{ fully open in } \gamma\}$ is contained in $\{\lr{}{\omega\cup\gamma}{0}{x}\}$. As a conclusion, we obtain the desired uniform lower bound $\psi^0_{\cS(L,N),p,\varepsilon} [\lr{}{\omega\cup \gamma}{0}{x}]\geq c'\coloneqq c \varepsilon^{CL^d}>0$ for every $x\in \cS(L,N)$ and every $N$.
\end{proof}

\begin{lemma}\label{lem:stoc_dom}
    For every $0\leq p<p'\leq 1$, there exists $\varepsilon>0$ such that for every $\Lambda\subset \Z^d$ one has the stochastic domination $\omega_{p'}\succ \omega_p\cup \gamma_\varepsilon$, where $\omega_{p'} \sim_d \phi^0_{\Lambda,p'}$ and $(\omega_{p},\gamma_\varepsilon) \sim_d \psi^0_{\Lambda,p,\varepsilon}$. 
\end{lemma}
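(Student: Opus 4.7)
The plan is to apply Holley's inequality. Let $\mu$ denote the law of $\omega_p\cup\gamma_\varepsilon$ on $\{0,1\}^{E(\Lambda)}$ under $\psi^0_{\Lambda,p,\varepsilon}$, and set $\nu\coloneqq\phi^0_{\Lambda,p'}$; both measures have full support. Holley's local criterion reduces the stochastic domination $\mu\preceq\nu$ to verifying that, for every edge $e\in E(\Lambda)$ and every pair of configurations $\eta\leq\eta'$ on $E(\Lambda)\setminus\{e\}$,
\[
\frac{\nu(\omega_e=1\mid \omega_{E\setminus e}=\eta')}{\nu(\omega_e=0\mid \omega_{E\setminus e}=\eta')} \;\geq\; \frac{\mu(\omega_e=1\mid \omega_{E\setminus e}=\eta)}{\mu(\omega_e=0\mid \omega_{E\setminus e}=\eta)}.
\]

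First I would compute both sides explicitly. The left-hand ratio is standard for FK: it equals $p'/(1-p')$ when the endpoints of $e$ are connected in $\eta'$, and $p'/(q(1-p'))$ otherwise. For the right-hand side, the independence of $\gamma_\varepsilon$ from $\omega_p$ together with the tower property yield
\[
\mu(\omega_e=0\mid \omega_{E\setminus e}=\eta) \;=\; (1-\varepsilon)\,\mathbb{E}\big[\phi^0_{\Lambda,p}(\omega_e=0\mid \omega_p|_{E\setminus e}) \;\big|\; A_\eta\big],
\]
where $A_\eta\coloneqq\{\omega_p\cup\gamma_\varepsilon=\eta \text{ on } E\setminus e\}$. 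The integrand takes only two values: $1-p$ when $\omega_p|_{E\setminus e}$ connects the endpoints of $e$, and $q(1-p)/(p+q(1-p))$ otherwise.

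The crucial observation is that on $A_\eta$ one has $\omega_p\leq\eta$ on $E\setminus e$, so when the endpoints of $e$ are disconnected in $\eta$ they are a.s.\ disconnected in $\omega_p|_{E\setminus e}$ as well, pinning the integrand to the larger value. Splitting according to the connectivity of the endpoints of $e$ in $\eta$ and $\eta'$ (the case \emph{connected in $\eta$, disconnected in $\eta'$} is excluded by $\eta\leq\eta'$), the Holley inequality reduces to three explicit conditions in $p,p',q,\varepsilon$. The most restrictive, arising when the endpoints are connected in both, is $(1-\varepsilon)(1-p)\geq 1-p'$, i.e.\ $\varepsilon\leq (p'-p)/(1-p)$. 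The condition from the \emph{disconnected in both} case reduces at $\varepsilon=0$ to $(1-p)/(p+q(1-p))\geq (1-p')/(p'+q(1-p'))$; I would verify it by checking that $p\mapsto (1-p)/(p+q(1-p))$ has strictly negative derivative for $q\geq 1$, which gives a strict inequality at $\varepsilon=0$ and hence survives for small $\varepsilon$ by continuity. The remaining mixed case is strictly weaker than the first.

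The main subtlety lies in identifying the correct conditional odds of $\mu$: the independence of $\gamma_\varepsilon$ factors out $(1-\varepsilon)$ cleanly, but one then has to exploit that conditioning on $A_\eta$ forces $\omega_p$ to be pointwise dominated by $\eta$ off $e$, which is what pins down the FK conditional in the disconnected case. Once this is in place, the remaining algebra is elementary and the thresholds depend only on $p,p',q$, so any $\varepsilon>0$ below their minimum yields the lemma uniformly in $\Lambda\subset\Z^d$.
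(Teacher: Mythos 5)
Your argument is correct, and it takes a genuinely different route from the paper. The paper never touches the law of $\omega_p\cup\gamma_\varepsilon$ directly: it builds a sequential (edge-by-edge) coupling of $\omega_p$ and $\omega_{p'}$ driven by common uniforms and reduces the domination to a uniform additive gap $\phi^{\xi'}_{\Delta,p'}[\omega_e]-\phi^{\xi}_{\Delta,p}[\omega_e]\geq\varepsilon$ over all domains, ordered boundary conditions and edges, which it obtains from Russo's formula plus FKG, yielding the explicit value $\varepsilon=(p'-p)/q$. You instead verify the strengthened single-bond Holley criterion (the cross-configuration form, equivalently a monotone coupling of the heat-bath dynamics) for the sprinkled measure $\mu$ versus $\phi^0_{\Lambda,p'}$, computing $\mu$'s conditional odds exactly; the pinning observation (on $A_\eta$ one has $\omega_p\leq\eta$ off $e$, so disconnection in $\eta$ forces disconnection in $\omega_p$) is precisely what makes the worst cases computable, and your case analysis and the monotonicity of $p\mapsto(1-p)/(p+q(1-p))$ are correct, giving thresholds depending only on $p,p',q$ and hence uniformity in $\Lambda$. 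What the paper's route buys is that it never needs the conditional structure of the non-Gibbsian sprinkled law (the sprinkling enters only through the additive gap), and the Russo/FKG bound $\partial_p\phi^\xi_{\Delta,p}[\omega_e]\geq 1/q$ is robust (it also transfers immediately to arbitrary boundary conditions, which is how the paper handles the conditioning); what your route buys is a more elementary, calculus-free verification with explicit constants, at the cost of invoking the stronger local Holley condition and of the strict-positivity hypothesis, so you should dispose of the degenerate endpoint $p'=1$ (where $\phi^0_{\Lambda,1}$ is not fully supported but the domination is trivial) separately.
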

\begin{proof}
    We construct a natural coupling between $\omega_p$ and $\omega_{p'}$ as follows. Let $e_1,\ldots,e_n$ be an arbitrary enumeration of $E(\Lambda)$ and $(U_i)_{i\geq n}$ be i.i.d.~$\mathrm{Uniform}[0,1]$ random variables. We inductively define 
    $$\omega_p(e_{i}) = 1_{ \{ U_i\leq \phi^0_{\Lambda,p}[\omega(e_i)\,|\, \omega(e_j)=\omega_p(e_j)~\forall j<i] \} },$$
    and analogous for $p'$. By the Markov property, the desired stochastic domination would directly follow if we prove that there exists $\varepsilon>0$ such that for every $\Delta\subset \Z^d$, every pair of boundary condition $\xi'\geq \xi$ and every edge $e\in E(\Delta)$, we have
    $$\phi^{\xi'}_{\Delta,p'}[\omega_e]-\phi^\xi_{\Delta,p}[\omega_e]\geq \varepsilon.$$
    By monotonicity in boundary conditions, it is enough to consider $\xi'=\xi$. By Russo's formula, one has
    $$\partial_p \phi^\xi_{\Delta,p}[\omega_e] = \frac{1}{p(1-p)} \sum_{f\in E(\Delta)} \phi^\xi_{\Delta,p}[\omega_e \omega_f]-\phi^\xi_{\Delta,p}[\omega_e]\phi^\xi_{\Delta,p}[\omega_f]\geq \frac{1}{p(1-p)} \phi^\xi_{\Delta,p}[\omega_e](1-\phi^\xi_{\Delta,p}[\omega_e])\geq 1/q,$$ where we used in the last inequality that $\frac{p}{p+q(1-p)}=\phi^0_{\{e\},p}[\omega_e]\leq\phi^\xi_{\Delta,p}[\omega_e]\leq \phi^1_{\{e\},p}[\omega_e] = p$. The result follows for $\varepsilon=(p'-p)/q$.
\end{proof}

\begin{proof}[Proof of Theorem~\ref{thm:main_FK}]
    It follows readily from Proposition~\ref{prop:slab} and Lemma~\ref{lem:stoc_dom} that $\hat{p}_c\leq \tilde{p}_c$. The opposite inequality is classical, see \cite{Pis96}.
\end{proof}


\appendix

\section{Positivity of the surface tension}\label{sec:appendix}

In this Appendix, we prove Theorem~\ref{thm:tilde_p_c=p_c}. The proof relies on specific properties of the Ising model, namely the Ginibre inequality, which is used in the proof of Theorem~\ref{thm:pos_suf_tens} below, and the uniqueness of infinite volume measure for its FK representation on the half space with positive boundary wiring, which is used in the proof of Proposition~\ref{prop:surface_tension_wired=free} below (see Lemma~\ref{lem:wired=free}). Since we will only work with the FK-Ising model ($q=2$), we henceforth omit $q$ from the notation. We start by introducing the Ising model. Given a finite $\Lambda\subset\Z^d$, and inverse temperature $\beta\geq 0$ and a boundary field $\eta\in \R^{\partial\Lambda}$, we define the Hamiltonian on $\Sigma_\Lambda\coloneqq \{-1,1\}^{\Lambda}$ given by
\begin{equation*}
    H^\eta_\Lambda(\sigma)\coloneqq - \sum_{xy\in E(\Lambda)} \sigma_x\sigma_y,
\end{equation*}
where for $y\in\partial \Lambda$ we write $\sigma_y=\eta_y$. Given an inverse temperature $\beta\geq0$, consider probability measure $\langle\cdot\rangle^\eta_{\Lambda,\beta}$ on $\Sigma_\Lambda$ given by
\begin{equation*}
    \langle f(\sigma) \rangle^\eta_{\Lambda,\beta} = \frac{1}{Z^\eta_{\Lambda,\beta}} \sum_{\sigma\in\Sigma_\Lambda} f(\sigma)e^{-\beta H^\eta_\Lambda(\sigma)},
\end{equation*}
where $Z^\eta_{\Lambda,\beta}=\sum_{\sigma\in\Sigma_\Lambda} e^{-\beta H^\eta_\Lambda(\sigma)}$ is the normalizing partition function -- see e.g.~\cite{FV17} for an introduction to the Ising model.
We denote by $+$ the boundary field $\eta\equiv 1$ and by $\pm$ the boundary field given by $\eta=1_{\partial^+\Lambda}-1_{\partial^-\Lambda}$, where $\partial^+\Lambda=\partial^\Lambda\cap (\Z^{d-1}\times\N)$ and $\partial^-\Lambda=\partial\Lambda\setminus \partial^+\Lambda$. Let $\langle \cdot\rangle^+_\beta$ denote the infinite volume measure on $\Sigma=\{-1,1\}^{\Z^d}$ obtained as the weak limit of $\langle \cdot\rangle^+_{\Lambda,\beta}$ as $\Lambda\uparrow\Z^d$. The critical inverse temperature is given by 
\begin{equation*}
    \beta_c\coloneqq \inf \{\beta\geq0:~ \langle \sigma_0\rangle^+_\beta>0\}.
\end{equation*}
Finally, the \emph{surface tension} at $\beta$ is defined as 
\begin{equation}\label{eq:def_surface_tension}
    \tau_\beta \coloneqq \lim_{L\to\infty} \frac{1}{L^{d-1}} \lim_{M\to\infty} \log \frac{Z^+_{\cR(L,M),\beta}}{Z^\pm_{\cR(L,M),\beta}}.
\end{equation}
It is classical that the limit in \eqref{eq:def_surface_tension} exists and that it is the same if taken jointly in $L$ and $M$ for any $M=M(L)\to \infty$ as $L\to \infty$, see e.g. \cite{MMR92}. 

\begin{theorem}[\cite{LebPfi81}]\label{thm:pos_suf_tens}
    One has $\tau_\beta>0$ for every $\beta>\beta_c$.
\end{theorem}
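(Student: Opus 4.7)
Plan. I would follow the strategy of Lebowitz and Pfister. The output is a quantitative lower bound on $\tau_\beta$ in terms of the spontaneous magnetization $m(\beta):=\langle\sigma_0\rangle^+_\beta$, which is positive for $\beta>\beta_c$ by definition. The two ingredients are (i) an interpolation identity expressing $\log(Z^+_\cR/Z^\pm_\cR)$ as a boundary-layer integral of magnetizations, and (ii) Ginibre's correlation inequality, which controls the integrand from below.

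Step 1 (Interpolation). For $s\in[-1,1]$, let $\langle\cdot\rangle^{(s)}_{\cR,\beta}$ denote the Ising measure on $\cR=\cR(L,M)$ with boundary field $+1$ on $\partial^+\cR$ and uniform boundary field $s$ on $\partial^-\cR$, so that $\langle\cdot\rangle^{(+1)}=\langle\cdot\rangle^+$ and $\langle\cdot\rangle^{(-1)}=\langle\cdot\rangle^\pm$. Differentiating $\log Z^{(s)}$ in $s$ and integrating yields
\begin{equation*}
\log\frac{Z^+_\cR}{Z^\pm_\cR}=\beta\int_{-1}^{1}\sum_{x\in L_-}\langle\sigma_x\rangle^{(s)}_{\cR,\beta}\,ds,
\end{equation*}
where $L_-$ denotes the layer of sites of $\cR$ adjacent to $\partial^-\cR$. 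The task is reduced to a uniform-in-$M$ lower bound on this boundary-layer integral.

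Step 2 (Ginibre plus symmetry). By Griffiths' monotonicity, $s\mapsto\langle\sigma_x\rangle^{(s)}_{\cR,\beta}$ is non-decreasing. Using the Ginibre inequality together with the midplane reflection and the $\sigma\mapsto-\sigma$ spin-flip symmetry of the Ising Hamiltonian, one shows that for each $x\in L_-$ at macroscopic distance from the side walls of $\cR$, the integral $\int_{-1}^{1}\langle\sigma_x\rangle^{(s)}\,ds$ is bounded below by a positive quantity depending only on $m(\beta)$, uniformly in $M$. Summing over $x\in L_-$ and using $|L_-|\asymp L^{d-1}$ gives $\log(Z^+_\cR/Z^\pm_\cR)\gtrsim L^{d-1}$ with an implicit constant uniform in $M$, whence $\tau_\beta>0$ after sending $M\to\infty$ and $L\to\infty$.

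Main obstacle. The heart of the proof is the pointwise lower bound in Step~2. The integrand $\langle\sigma_x\rangle^{(s)}$ is monotone in $s$ but crosses zero somewhere in $(-1,1)$, so a naive monotonicity argument gives only $\int\geq 0$; genuine cancellation between the regions $s<0$ (where the integrand is negative) and $s>0$ (where it is positive) is a priori possible. The Ginibre inequality, which is strictly stronger than the FKG/GKS inequalities available for generic ferromagnets and is specific to the Ising interaction, is precisely the tool that produces the quantitative asymmetry needed to exclude this cancellation. A secondary technical point is the uniformity in $M\to\infty$: the argument must be carried out so that the bulk magnetization survives at macroscopic distance from the receding top boundary, which is guaranteed by Griffiths' monotonicity in the domain.
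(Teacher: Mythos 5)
Your proposal takes a genuinely different interpolation route from the one in the paper, and as written it has a gap at the crucial step.

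The paper (following Lebowitz--Pfister) does \emph{not} interpolate in a boundary field. It differentiates the finite-volume surface tension in $\beta$,
\[
\frac{d}{d\beta}\tau^{L,M}_\beta=\frac{1}{L^{d-1}}\sum_{xy\in E(\cR(L,M))}\Big(\langle\sigma_x\sigma_y\rangle^+_{\cR,\beta}-\langle\sigma_x\sigma_y\rangle^\pm_{\cR,\beta}\Big),
\]
restricts to vertical edges, applies Ginibre to each term in the form $\langle\sigma_x\sigma_y\rangle^+-\langle\sigma_x\sigma_y\rangle^\pm\ge|\langle\sigma_x\rangle^+\langle\sigma_y\rangle^\pm-\langle\sigma_y\rangle^+\langle\sigma_x\rangle^\pm|$, sends $M\to\infty$ so that $\langle\sigma_{(x,j)}\rangle^+_{\cR(L)}$ becomes $j$-independent, and then the sum over $j$ of $\langle\sigma_{(x,j+1)}\rangle^\pm-\langle\sigma_{(x,j)}\rangle^\pm$ telescopes to $2\langle\sigma_{(x,0)}\rangle^+_{\cR(L)}$. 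This yields the clean differential inequality $\frac{d}{d\beta}\tau_\beta\ge 2^d\,\big(\langle\sigma_0\rangle^+_\beta\big)^2$, from which positivity for $\beta>\beta_c$ follows at once since $\tau_{\beta}\ge 0$. The telescoping is what makes Ginibre give a \emph{strict} lower bound with no extra work.

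Your scheme interpolates instead in the bottom boundary field $s\in[-1,1]$, reducing to a lower bound on $\int_{-1}^{1}\langle\sigma_x\rangle^{(s)}\,ds$ for $x$ in the bottom interior layer. Step~1 (the interpolation identity) is correct, but Step~2 is asserted rather than proved, and it is precisely the hard part. Ginibre does give something here: taking $\eta=(1,s)$ and $\eta'=(1,|s|)$ and choosing $A=\{x\}$, $B=\{y\}$ for the appropriate boundary vertex $y\in\partial^-\cR$, one deduces $\langle\sigma_x\rangle^{(s)}+\langle\sigma_x\rangle^{(-s)}\ge 0$, hence $\int_{-1}^1\langle\sigma_x\rangle^{(s)}\,ds\ge 0$. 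But a strictly positive lower bound, uniform in $M$, in terms of $m(\beta)$ does not follow from this and is not produced by the reflection/spin-flip symmetries you invoke; as you yourself observe, cancellation between $s<0$ and $s>0$ is the issue, and saying that Ginibre ``is precisely the tool that produces the quantitative asymmetry'' is not a proof. Until that quantitative bound is supplied (and it would require an argument of comparable subtlety to the Lebowitz--Pfister telescoping), the proposal does not establish $\tau_\beta>0$. If you want to stay with a boundary-field interpolation, you would need to make Step~2 precise; otherwise I would suggest switching to the $\beta$-derivative, where the telescoping structure does the quantitative work for you.
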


\begin{proof}
    Our main tool will be the Ginibre inequality (see \cite{Leb77}):
    \begin{equation}\label{eq:Ginibre}
        \langle \sigma_A\sigma_B \rangle^{\eta'}_{\Lambda,\beta}-\langle \sigma_A\sigma_B \rangle^{\eta}_{\Lambda,\beta} \geq \big| \langle \sigma_A \rangle^{\eta'}_{\Lambda,\beta}\langle \sigma_B \rangle^{\eta}_{\Lambda,\beta} - \langle \sigma_B \rangle^{\eta'}_{\Lambda,\beta}\langle \sigma_A \rangle^{\eta}_{\Lambda,\beta} \big|,
    \end{equation}
    for all $A,B\subset \Lambda$ and $\eta,\eta'$ such that $|\eta|\leq \eta'$.
    Let $\tau^{L,M}_\beta \coloneqq \frac{1}{L^{d-1}} \log \frac{Z^\pm_{\cR(L,M),\beta}}{Z^+_{\cR(L,M),\beta}}$ and note that
    \begin{equation*}
    \frac{d}{d\beta} \tau^{L,M}_\beta =  \frac{1}{L^{d-1}} \sum_{x,y\in E(\cR(L,M))} \langle \sigma_x\sigma_y \rangle^{+}_{\cR(L,M),\beta} - \langle \sigma_x\sigma_y \rangle^{\pm}_{\cR(L,M),\beta}.
    \end{equation*}
    Let $\tau^{L}_\beta \coloneqq \lim_{M\to \infty} \tau^{L,M}_\beta$ and note that, since $\cR(L)=\{-L,L\}^{d-1}\times\Z$ is ``one-dimensional'', the terms in the sum tends to $0$ exponentially fast with the distance to $\Z^{d-1}\times\{0\}$. Also, by \eqref{eq:Ginibre} each term in the sum is positive. Therefore, by applying \eqref{eq:Ginibre}, disregarding the horizontal edges, taking $M\to\infty$ and using translation invariance in the $i_d$ direction, we obtain
    \begin{align*}
    \frac{d}{d\beta} \tau^{L}_\beta &\geq  \frac{1}{L^{d-1}} \sum_{\substack{ x\in \{-L,\ldots,L\}^{d-1}\\ j\in\Z}}  \langle \sigma_{(x,0)} \rangle^{+}_{\cR(L),\beta} \big( \langle \sigma_{(x,j+1)} \rangle^{\pm}_{\cR(L),\beta} - \langle \sigma_{(x,j)} \rangle^{\pm}_{\cR(L),\beta} \big) \\
    &= \frac{1}{L^{d-1}} \sum_{x\in \{-L,\ldots,L\}^{d-1}} 2 (\langle \sigma_{(x,0)} \rangle^{+}_{\cR(L),\beta})^2
    \end{align*}
    where we used that $\langle \sigma_{(x,j)} \rangle^{\pm}_{\cR(L),\beta} \to \langle \sigma_{(x,0)} \rangle^{+}_{\cR(L),\beta}$ and $j\to+\infty$ and $\langle \sigma_{(x,j)} \rangle^{\pm}_{\cR(L),\beta} \to - \langle \sigma_{(x,0)} \rangle^{+}_{\cR(L),\beta}$ as $j\to-\infty$. Finally, by taking $L\to\infty$ we obtain 
    \begin{equation*}
        \frac{d}{d\beta} \tau_\beta \geq 2^d (\langle \sigma_{0} \rangle^{+}_{\beta})^2,
    \end{equation*}
    which readily implies that $\tau_\beta>0$ for every $\beta>\beta_c$.
\end{proof}

The Ising model at inverse temperature $\beta$ and the FK-Ising with $p=1-e^{-2\beta}$ are coupled together through the so-called Edwards--Sokal coupling -- see \cite{Gri06,DumNotes17} for details. In particular, one has the relation $\phi^1_p[\lr{}{}{0}{\infty}]=\langle \sigma_0 \rangle^+_\beta$, and therefore $p_c=1-e^{-2\beta_c}$. One can also re-interpret the Ising surface tension in terms of the FK-Ising model using the aforementioned coupling and noting that
\begin{equation*}
    \frac{Z^\pm_{\cR(L,M),\beta}}{Z^+_{\cR(L,M),\beta}} = \phi^1_{\cR(L,M),p} [\nlr{}{}{\partial^+\cR(L,M)}{\partial^-\cR(L,M)}].
\end{equation*}
Therefore, in order to prove Theorem~\ref{thm:tilde_p_c=p_c}, it remains to compare wired and free boundary conditions, which is the subject of the following proposition.

\begin{proposition}[\cite{Bod05}]\label{prop:surface_tension_wired=free}
    For $q=2$, $\tau_\beta>0$ implies $\tilde{\tau}_p>0$ with $p=1-e^{-2\beta}$.
\end{proposition}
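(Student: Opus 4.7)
First I would extract the FK content of Theorem~\ref{thm:pos_suf_tens}. The Edwards--Sokal identity recalled just above the proposition, together with the fact (cited right after \eqref{eq:def_surface_tension}) that the limit defining $\tau_\beta$ can be taken jointly in $L$ and $M$ with $M=M(L)$ any sequence tending to $\infty$, immediately yields: for $p=1-e^{-2\beta}$ with $\tau_\beta>0$, there exist $c,\delta>0$ such that for all $L$ large enough,
\begin{equation*}
\phi^1_{\cR(L,\delta L),p}\bigl[\nlr{}{}{\partial^+\cR(L,\delta L)}{\partial^-\cR(L,\delta L)}\bigr]\leq e^{-cL^{d-1}}.
\end{equation*}
The task is then to upgrade this wired-BC estimate on $\cR(L,\delta L)$ to the free-BC estimate on the macroscopically larger box $\Lambda_{CL}$ required by the definition of $\tilde\tau_p$.

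Fix $C\gg 1$ and consider the two ``half-slab'' regions $\mathcal{T}^\pm$ of $\Lambda_{CL}$ lying strictly above $\partial^+\cR(L,\delta L)$ and strictly below $\partial^-\cR(L,\delta L)$. Let $G^\pm$ be the event that all vertices of $\partial^\pm\cR(L,\delta L)$ belong to a common cluster of $\omega|_{\mathcal{T}^\pm\cup\partial^\pm\cR}$, and set $G=G^+\cap G^-$. On $G$, the DLR property of FK measures says that, conditionally on $\omega|_{\Lambda_{CL}\setminus\cR(L,\delta L)}$, the restriction of $\omega$ to $\cR(L,\delta L)$ is distributed as $\phi^\eta_{\cR(L,\delta L),p}$ with boundary condition $\eta$ wiring all of $\partial^+\cR$ into one class and all of $\partial^-\cR$ into one class (and possibly gluing these two classes if $\mathcal{T}^+$ and $\mathcal{T}^-$ are also connected through $\Lambda_{CL}\setminus\cR$). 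Since the disconnection event $D=\{\nlr{}{}{\partial^+\cR(L,\delta L)}{\partial^-\cR(L,\delta L)}\}$ is decreasing, monotonicity in boundary conditions gives $\phi^\eta_\cR[D]\leq\phi^{\eta_{\rm sep}}_\cR[D]$, where $\eta_{\rm sep}$ is the BC wiring only the two sides separately; a short Radon--Nikodym computation against the cluster weight $2^{k(\omega)}$ then gives the further bound $\phi^{\eta_{\rm sep}}_\cR[D]\leq 2\,\phi^1_\cR[D]$. Combining these estimates and integrating over the exterior yields $\phi^0_{\Lambda_{CL},p}[D\cap G]\leq 2e^{-cL^{d-1}}$.

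It then remains to bound $\phi^0_{\Lambda_{CL},p}[G^c]$ by an exponential in $L^{d-1}$, and this is where Lemma~\ref{lem:wired=free} becomes essential. By the uniqueness of the half-space FK-Ising Gibbs measure for $p>p_c$, the marginal of $\phi^0_{\Lambda_{CL},p}$ on $\mathcal{T}^\pm$ is, modulo corrections that vanish as $C\to\infty$, the unique half-space measure, in which the unique infinite cluster has positive density on the boundary hyperplane. A covering argument in the spirit of Lemma~\ref{lem:surface_free_box}, applied in the half-space, then shows that each vertex of $\partial^\pm\cR(L,\delta L)$ lies in the infinite cluster of $\omega\cap\mathcal{T}^\pm$ except on an event of sub-exponential probability at the relevant scale, which combines via a union bound to give $\phi^0_{\Lambda_{CL},p}[G^c]\leq e^{-c''L^{d-1}}$. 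I expect this final half-space density estimate --- turning qualitative uniqueness into a quantitative surface-order bound on $G^c$, and doing so uniformly in the choice of outer box $\Lambda_{CL}$ --- to be the main technical obstacle; all earlier steps are Edwards--Sokal bookkeeping, DLR, FKG and a local Radon--Nikodym computation.
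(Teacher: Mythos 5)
There is a genuine gap, and it is fatal to the second half of your scheme. Your event $G$ requires \emph{every} vertex of $\partial^{+}\cR(L,\delta L)$ (of which there are of order $L^{d-1}$) to lie in one common cluster of the configuration above the slab, and similarly below. But under any FK measure with $p<1$, each fixed boundary vertex is isolated (all incident edges closed) with probability bounded below by a positive constant, uniformly in the configuration elsewhere, by finite energy. Hence $\phi^0_{\Lambda_{CL},p}[G]\leq e^{-cL^{d-1}}$, i.e.\ $G^c$ has probability close to $1$, and the decomposition $\phi^0[D]\leq\phi^0[D\cap G]+\phi^0[G^c]$ gives nothing. Your claim that each boundary vertex fails to join the cluster only ``on an event of sub-exponential probability'' is incorrect: the per-vertex failure probability is of constant order, so no union bound can produce $\phi^0[G^c]\leq e^{-c''L^{d-1}}$. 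Moreover, even a repaired version of $G$ (say, one vertex per mesoscopic boundary box connected to a common cluster) would rest on the assertion that, under (essentially) free boundary conditions, the half-space region $\mathcal{T}^\pm$ percolates with positive density at its boundary hyperplane, uniformly in $L$ and with surface-order error bounds. That statement is not an available input: supercritical percolation in half-spaces/slabs down to $p_c$ is precisely the kind of result this paper (and \cite{Bod05}, via the dynamic renormalization of \cite{BarGriNew91}) is establishing, so your argument is essentially circular at this point. Lemma~\ref{lem:wired=free} cannot substitute for it: it is a purely qualitative, single-edge weak mixing statement for the half-space with a wired (or field) bottom face, not a density or percolation estimate for the free half-space.

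For contrast, the paper's proof deliberately avoids any percolation statement near the boundary. After the same Edwards--Sokal step as yours, it first removes the lateral boundary wiring by a crude Radon--Nikodym bound (cost $e^{C\delta L^{d-1}}$, affordable since the lateral boundary has size $\delta L^{d-1}$), and then compares the remaining ``wired top/bottom, free lateral'' measure to the free measure by interpolating the intensity $sp$ on the bonds touching $\partial^{top}\cR\cup\partial^{bot}\cR$: Russo's formula expresses $\partial_s\log\phi^{s,0}_{\cR,p}[\cD]$ as a sum of influences of these boundary bonds on $\cD$, each of which is controlled by the single-edge quantity $\phi^{s,1}_{H(K),p}[\omega_{b_0}]-\phi^{s,0}_{H(K),p}[\omega_{b_0}]$ of Lemma~\ref{lem:wired=free}, with $K=\tfrac{\delta}{2}L\to\infty$. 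The total comparison cost is again only $e^{C\delta L^{d-1}}$, which is beaten by the surface-tension gain $e^{-L^{d-1}\tau_\beta/4}$ once $\delta$ is chosen small. If you want to salvage your conditioning idea, you would need to replace $G$ by an event of this ``soft'' nature (a comparison of measures, not a geometric wiring event); as written, the step bounding $\phi^0[G^c]$ cannot work.
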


\begin{proof}
   The proof follows the same lines as \cite[Theorem 3.2]{Bod05}, with some simplifications.
   By definition and monotonicity in boundary conditions, if $\tau_\beta>0$ then for every $\delta>0$ one has
   \begin{equation*}
       \phi^1_{\cR(L,\delta L),p}[ \nlr{}{}{\partial^{-} \cR(L, \delta L)}{\partial^{+} \cR(L, \delta L)} ] \leq e^{-L^{d-1} \tau_\beta/2}
   \end{equation*}
  for $L$ large enough. Notice that the lateral boundary of $\cR(L,\delta L)$ has size of order $\delta L^{d-1}$, so for $\delta$ small enough a simple Radon–Nikodym derivative estimate gives
   \begin{equation*}
   \phi^{1,0}_{\cR,p}[ \nlr{}{}{\partial^{bot} \cR(L, \delta L)}{\partial^{top} \cR(L, \delta L)} ] \leq e^{-L^{d-1} \tau_\beta/4},
   \end{equation*}
   where $\cR=\cR(L,\delta L)$ and $\xi=\{1,0\}$ stands for the boundary condition which is wired on the top and bottom boundaries and free on the lateral boundary. By inclusion of events, we have 
   \begin{equation}\label{eq:mixed_b.c.}
   \phi^{1,0}_{\cR,p}[ \cD ] \leq e^{-L^{d-1} \tau_\beta/4},
   \end{equation}
   where $\cD\coloneqq \{ \nlr{}{}{\partial^{bot} \cR(L, \frac{\delta}{2} L)}{\partial^{top} \cR(L, \frac{\delta}{2} L)} \}$.
   For all $s\in[0,1]$, let $\phi^{s,0}_{\cR,p}$ be the measure with boundary condition $\{1,0\}$ and intensity $sp$ (defined in the natural way) on the set of bonds $\cB$ which are incident to $\partial^{top} \cR$ or $\partial^{bot} \cR$, and intensity $p$ elsewhere. Note that $\phi^{0,0}_{\cR,p}=\phi^{0}_{\cR(L,\delta L-1)}\prec \phi^0_{\Lambda_L,p}$.
   By the Russo's formula for FK percolation \cite[Theorem 3.12]{Gri06}, we have
   \begin{equation*}
       \partial_s (\log \phi^{s,0}_{\cR,p}[\cD])= \frac{1}{1-sp} \sum_{b\in \cB} \phi^{s,0}_{\cR,p}[\omega_b \,|\, \cD] -\phi^{s,0}_{\cR,p}[\omega_b].
   \end{equation*}
    Since $\cD$ is supported on $\cR(L,\frac{\delta}{2} L)$, the domain Markov property and a comparison between boundary conditions imply that, for every bond $b$ at distance at least $\frac{\delta}{2} L$ from the lateral boundary of $\cR$, we have
    $$|\phi^{s,0}_{\cR,p}[\omega_b \,|\, \cD] -\phi^{s,0}_{\cR,p}[\omega_b]| \leq \phi^{s,1}_{H(\frac{\delta}{2} L),p}[\omega_{b_0}]-\phi^{s,0}_{H(\frac{\delta}{2} L),p}[\omega_{b_0}],$$ 
    where $b_0$ is the bond $\{0,e_d\}$ and $\phi^{s,1}_{H(K),p}$ (resp. $\phi^{s,0}_{H(K),p}$) is the measure on $H(K)\coloneqq \{-K,\ldots, K\}^{d-1}\times \{0,\ldots, K\}$ with intensity $sp$ and wired boundary conditions on the bottom face and wired (resp. free) boundary condition on the rest of the boundary. Our goal is to show that $\phi^{s,1}_{H(K),p}[\omega_{b_0}]-\phi^{s,0}_{H(K),p}[\omega_{b_0}]$ is small for large $K$. This is given by the following ``weak mixing'' result.

    \begin{lemma}\label{lem:wired=free}
       For every $s>0$ and $p\in[0,1]$, one has $\phi^{s,1}_{H(K),p}[\omega_{b_0}]-\phi^{s,0}_{H(K),p}[\omega_{b_0}]\to 0$ as $K\to\infty$.
    \end{lemma}

    Before proving Lemma~\ref{lem:wired=free}, we conclude the proof of Proposition~\ref{prop:surface_tension_wired=free}. By Lemma~\ref{lem:wired=free} and the dominated convergence theorem, we have
    \begin{align}\label{eq:dif_log}
    \begin{split}
        &|\log \phi^{1,0}_{\cR,p}[\cD] - \log \phi^{0,0}_{\cR,p}[\cD]|=\int_{0}^1 \partial_s (\log \phi^{s,0}_{\cR,p}[\cD]) ds \\
        &\leq C\delta L^{d-1} + CL^{d-1} \int_{0}^1 (\phi^{s,1}_{H(\frac{\delta}{2} L),p}[\omega_{b_0}]-\phi^{s,0}_{H(\frac{\delta}{2} L),p}[\omega_{b_0}]) ds \leq  2C\delta L^{d-1},
    \end{split}
    \end{align}
    for $L$ large enough, where the first term $C\delta L^{d-1}$ accounts for the bonds with distance at most $\frac{\delta}{2} L$ from the lateral boundary of $\cR$. By choosing $\delta>0$ small enough so that $2C\delta<\tau_\beta/4$, inequalities \eqref{eq:mixed_b.c.} and \eqref{eq:dif_log} imply the result.
\end{proof}

\begin{proof}[Proof of Lemma~\ref{lem:wired=free}]
    Setting $\beta=-\tfrac{1}{2}\log(1-p)$ and $h=-\tfrac{1}{2}\log(1-sp)/\beta$, it follows easily from the Edwards--Sokal coupling that the statement is equivalent to
    \begin{equation*}
        \langle \sigma_0 \rangle^{h,+}_{H(K),\beta} - \langle \sigma_0 \rangle^{h,0}_{H(K),\beta} \to 0 ~~~ \text{as } K\to\infty,
    \end{equation*}
    where $h,+$ (resp. $h,0$) denotes the boundary field equals  $h$ on the bottom face of $H(K)$ and $1$ (resp. $0$) on the rest of $\partial H(K)$. It follows from \cite{MMP84} that the limits $\langle \sigma_0 \rangle^{h,+}_{\H,\beta} = \lim_{K\to\infty} \langle \sigma_0 \rangle^{h,+}_{H(K),\beta}$ and $\langle \sigma_0 \rangle^{h,0}_{\H,\beta} = \lim_{K\to\infty} \langle \sigma_0 \rangle^{h,0}_{H(K),\beta}$ exist and are analytic in $h>0$. By \cite{FP87} (see (2.13) and (2.21)) one has $\langle \sigma_0 \rangle^{h,+}_{\H,\beta} = \langle \sigma_0 \rangle^{h,-}_{\H,\beta}$ for all $h\geq1$. By monotonicity in boundary conditions it follows that $\langle \sigma_0 \rangle^{h,+}_{\H,\beta} = \langle \sigma_0 \rangle^{h,0}_{\H,\beta}$ for all $h\geq1$, thus for all $h>0$ by analyticity, which completes the proof.
\end{proof}

\bibliographystyle{alpha}

\end{document}